\documentclass{amsart}
\usepackage{graphicx}
\usepackage{color}
\usepackage{ifpdf}
\usepackage{amsmath,amssymb,amsfonts}
\newtheorem{theorem}{Theorem}[section]
\newtheorem{lemma}[theorem]{Lemma}

\theoremstyle{definition}
\newtheorem{definition}[theorem]{Definition}

\newtheorem{example}[theorem]{Example}
\newtheorem{cor}[theorem]{Corollary}

\numberwithin{figure}{section}
\newtheorem{remark}[theorem]{Remark}
\numberwithin{equation}{section}

\def \A{\mathcal{A}}
\def \D{\mathcal{D}}
\def \g{\gamma}
\def \G{\Gamma}

\def \im{\mathbf{i}}
\def \I{\mathcal{I}}

\def \lamax{\lambda_{\max}}
\def \L{\mathcal{L}}

\def \rmI{\mathrm{I}}
\def \s{\sigma}

\DeclareMathOperator{\sgn}{sgn}
\DeclareMathOperator{\diag}{diag}

\begin{document}
\title[Spectral method to incidence balance of oriented hypergraphs]
{A spectral method to incidence balance of oriented hypergraphs and induced signed hypergraphs}

\author[Y. Wang]{Yi Wang}
\address{School of Mathematical Sciences, Anhui University, Hefei 230601, P. R. China}
\email{wangy@ahu.edu.cn}

\author[L. Wang]{Le Wang}
\address{School of Mathematical Sciences, Anhui University, Hefei 230601, P. R. China}
\email{wangl@stu.ahu.edu.cn}

\author[Y.-Z. Fan]{Yi-Zheng Fan*}
\address{School of Mathematical Sciences, Anhui University, Hefei 230601, P. R. China}
\email{fanyz@ahu.edu.cn}
\thanks{*The corresponding author. This work was supported by National Natural Science Foundation of China (Grant No. 11771016, 11871073)}

\begin{abstract}
An oriented hypergraph is a hypergraph together with an incidence orientation such that each edge-vertex incidence is given a label of $+1$ or $-1$.
An oriented hypergraph is called incidence balanced if there exists a bipartition of the vertex set
  such that every edge intersects one part of the bipartition in positively incident vertices with the edge
  and other part in negatively incident vertices with the edge.
In this paper, we investigate the incidence balance of oriented hypergraphs and induced signed hypergraphs by means of the eigenvalues of associated matrices and tensors,
and provide a spectral method to characterize the incidence balanced oriented hypergraphs and their induced signed hypergraphs.
\end{abstract}

\subjclass[2020]{05C65, 15A18}

\keywords{Oriented hypergraph, signed hypergraph, balance, tensor, eigenvalue}

\maketitle

\section{Introduction}

An {\it oriented hypergraph} is a hypergraph together with an oriented structure such that each edge-vertex incidence is given a label of $+1$ or $-1$ \cite{RR, Rusnak}.
The sign of the edges of an oriented hypergraphs is naturally induced by the oriented edge-vertex incidence structure \cite{YYQ}.
An oriented hypergraph is sometime called a {\it signed hypergraph}
  if researchers focus on the sign of edges rather than the edge-vertex incidence orientation \cite{Shi, SB, YYQ}.
Clearly, a {\it signed graph} can be regarded as an oriented hypergraph in which each edge contains exactly two incidences \cite{Z2}.
So, the research of oriented hypergraphs extends that of signed graphs, among of which the matrices are used to investigate the oriented hypergraphs;
see \cite{CRRY, Reff1, Reff2, RR, RS}.

Recently, the spectral hypergraph theory has been an active topic in algebraic graph theory; see e.g. \cite{CD,FBH,FHB,LM,Ni,PZ,Qi}.
The adjacency tensors \cite{CD} and Laplacian tensors \cite{Qi} of uniform hypergraphs were introduced to investigate the structure of hypergraphs,
  just like adjacency matrices and Laplacian matrices to simple graphs.
In 2019, Yu et al. \cite{YYQ} extended some notions of the adjacency and Laplacian tensor from uniform hypergraphs to uniform oriented hypergraphs,
and presented some spectral properties of the adjacency or Laplacian tensors of uniform oriented hypergraphs.

A signed graph is said to be {\it balanced} if the product of the signs of all edges in every cycle is positive.
The balance is a global structural property of signed graphs, which is critical to many problems on combinatorial optimization and programming \cite{Sch}.
The signed graphs and their balance can be traced back to 1950s, which were introduced by Harary \cite{Harary} to treat a question in social psychology \cite{CH}.
Harary's Balance Theorem \cite{Harary}
tells us that a signed graph is balanced if and only if there exists a (possibly trivial) bipartition $\{X,Y\}$ of the vertex set
 such that an edge is negative precisely when it has one endpoint in $X$ and the other in $Y$.
The other characterizations for the balance of a signed graph were investigated from the structural or algebraic aspects \cite{Harary, HK, HLP,Z1}.
The notion of balance was generalized to oriented hypergraphs by Shi and Brzozowski \cite{SB} to model VLSI optimization problems.

\begin{definition}\cite{SB}\label{bdef}
An oriented hypergraph is called ({\it incidence}) {\it balanced} if there exists a bipartition of its vertices that balances
all of its edges, namely, every edge intersects one part of the bipartition in positively incident vertices with the edge
  and other part in negatively incident vertices with the edge.
\end{definition}

It is shown that an oriented hypergraph is balanced
if and only if the product of the signs of all incidences contained in each cycle is positive \cite{SB},
  which can be viewed as an {\it incidence version} of the balance of hypergraphs.
Rusnak \cite{Rusnak} gives another version of balance from the adjacency viewpoint,
  where an oriented hypergraph is called balanced if the product of the signs of all adjacencies contained in each cycle is positive.
 To avoid confusions, we refer to the balance defined in Definition \ref{bdef} as {\it incidence balance}, and the balance defined in \cite{Rusnak} as {\it adjacency balance}. In past years, researchers have characterized the adjacency balance of oriented hypergraphs by means of their structure \cite{Rusnak} and associated matrices \cite{CRRY}.

In this paper, we focus on the incidence balance of oriented hypergraphs
  and attempt to characterize the incidence balanced oriented hypergraphs in terms of the eigenvalues of their associated matrices and tensors.
The rest of this paper is organized as follows.
In Section 2, some basic notions together with their properties are introduced,
as well as some equivalent conditions for incidence balanced oriented hypergraphs are presented from the structural aspect.
In Section 3, our main result provides a spectral characterization of incidence balanced oriented hypergraphs by using the adjacency or Laplacian matrices.
In the final section, we characterize the signed hypergraphs induced by the incidence balanced oriented hypergraphs
  whose underlying hypergraph is connected and $k$-uniform with even $k$, by means of the eigenvalues of their adjacency or Laplacian tensors.

\section{Basic notions}
\subsection{Signed hypergraphs and oriented hypergraphs }

For a positive integer $n$, denote a set $[n]:=\{1, \ldots, n\}$.
A {\it hypergraph} $G=(V(G),E(G))$ is a pair consisting of a vertex set $V(G)=\{v_1,\ldots,v_n\}$ and an edge set $E(G)=\{e_{1},\ldots,e_m\}$,
  where $e_j\subseteq V(G)$ for each $j\in[m]$.
If $|e_j|=k$ for all $j\in[m]$, then $G$ is called a {\it $k$-uniform} hypergraph.
A {\it signed hypergraph} $\G=(G,\g)$ is a hypergraph $G$ together with a sign $\g$ of its edges, i.e. $\g:E(G) \to \{-1,1\}$,
  where an edge $e$ of $\G$ is called {\it positive} (or {\it negative}) if its sign $\g(e)=1$ (or $-1$).

An order pair $(e,v)$ is called an {\it incidence} if $v \in e$.
The {\it degree} of a vertex $v$, denoted by $d_v$, is defined to be the number of incidences containing $v$.
 Two vertices $u$ and $v$ are said to be {\it adjacent} with respect to an edge $e$, if there exist incidences $(e,u)$ and $(e,v)$.
An {\it adjacency} is a triple $(u,v; e)$ if $u$ and $v$ are adjacent with respect to the edge $e$.

A {\it walk} is a sequence $P=a_0, i_1, a_1, i_2, \ldots, a_{t-1}, i_t, a_t$, where $\{a_k\}_0^t$ is an alternating sequence of vertices and edges, $i_j$ is an incidence consisting of $a_{j-1}$ and $a_j$ for $j \in [t]$.
The walk $P$ is called a {\it path} if no vertices, edges, or incidences appeared in $P$ are repeated,
and is called a {\it cycle} if no vertices, edges, or incidences appeared in $P$ are repeated, expect $a_0=a_t$.
By symmetry we may assume a cycle starts from a vertex.
A hypergraph $G$ is {\it connected} if any two distinct elements of $V(G) \cup E(G) $ are connected by a walk in $G$.

\begin{figure}
\centering
  \setlength{\unitlength}{1bp}%
  \begin{picture}(200, 64.74)(0,0)
  \put(0,0){\includegraphics[scale=.8]{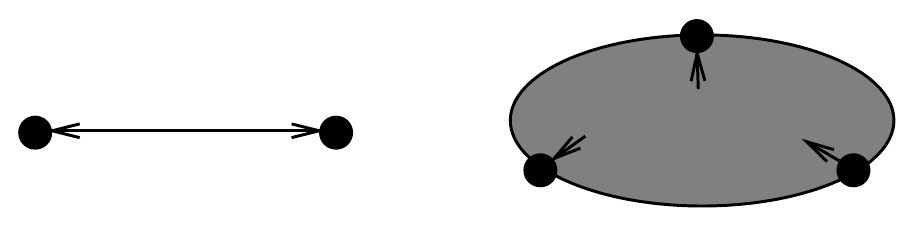}}
  \end{picture}%
\caption{Two oriented edges}\label{ori}
\end{figure}

Let $\I(G)$ denote the set of all incidences of $G$.
An {\it incidence orientation} of $G$ is a map $\sigma:  \mathcal{I}(G) \to \{-1,1\}$.
The hypergraph $G$ together with an incidence orientation $\sigma$ is called an {\it oriented hypergraph}, denoted by $G^\sigma$ or $(G,\sigma)$.
If we draw edges of $G^{\sigma}$ as shaded regions in the plane
whose incident vertices are arranged as points on its boundary, an oriented incidence $(e,v)$ can be viewed as an arrow existing $v$ when $\sigma(e,v)=1$,
or an arrow entering $v$ when $\sigma(e,v)=-1$; see Fig. \ref{ori}.
It should be noted that the signed graphs discussed in \cite{Z2} can be regarded as a special case of oriented $k$-uniform hypergraphs when $k=2$.

The {\it sign of an adjacency} $(u,v;e)$ in $G^{\s}$ is defined as
$$\sgn_\s (u,v;e)=-\s(e,u) \s(e,v).$$
The {\it sign of an edge} $e$ is defined as
$$\sgn_\s e=(-1)^{|e|-1} \s^e,$$
 where $ \s^e:=\prod_{v \in e} \s(e,v)$.
Let $P$ be path or a cycle of $G^\s$ as follows:
\begin{equation}\label{path}
P=a_0, i_1, a_1, i_2, \ldots, a_{t-1}, i_{t}, a_{t}.
\end{equation}
The {\it adjacency sign} of $P$ is defined as
$$\sigma_A(P):=(-1)^{\lfloor \frac{t}{2} \rfloor} \prod_{j=1}^{t} \sigma(i_j),$$
which implies that the adjacency sign of a cycle is the product of the signs of all adjacencies contained in the cycle.
Similarly, the {\it incidence sign} of $P$ is defined as $$\sigma_I(P):=\prod_{j=1}^{t} \sigma(i_j),$$
which is the product of the signs of all incidences contained in $P$.

For an oriented hypergraph $G^\s$, if we pay more attention to the sign of edges rather than the edge-vertex incidence orientation,
the oriented hypergraph $G^\s$ naturally induces a signed hypergraph $\G=(G,\g)$, denoted by $\G G^\sigma$, by defining $\g(e):=\sgn_\s e$ for each edge $e \in E(G)$.

\subsection{Switching equivalence}
Let $G^{\s}$ be an oriented hypergraph. The {\it edge-vertex incidence matrix} of $G^\s$, denoted by $M(G^\s)$, is defined by $M(G^\s)_{e,v}=\s(e,v)$ if $v \in e$ and
$M(G^\s)_{e,v}=0$ otherwise.
 A {\it vertex switching} $S_v$ on $G^\s$ at a vertex $v$  is an operation on $G^\s$ which reverses the incidence orientation $\s(e,v)$ by $-\s(e,v)$ for all edges $e$
incident to $v$, and keep the other incidence orientation invariant.
Equivalently, if we consider $S_v$ as a map $S_v: V(G) \to \{-1,1\}$ such that $S_v(u)=-1$ if $u =v$ and $S_v(u)=1$ otherwise,
 then $\s S_v$ is the incidence orientation of the resulting hypergraph by defining $\s S_v(e,u):=\s(e,u)S_v(u)$ for each incidence $(e,u)$.
From the viewpoint of the matrix, $S_v$ can be considered as a {\it signature matrix},
    a diagonal matrix whose diagonal entries are taken from $\{1, -1\}$, such that $(S_v)_{uu}=S_v(u)$ for all $u \in V(G)$.
Then
$M(G^{\s S_v})=M(G^\s)S_v$.
Generally, for a subset $U$ of $V(G)$, define a signature matrix $S_U:=\prod_{v\in U}S_v$, which is a sequence of vertex switchings taken at the vertices of $U$.

An {\it edge switching} $S_e$ at an edge $e$  is an operation on $G^\s$ which reverses the incidence orientation $\s(e,v)$ by $-\s(e,v)$ for all vertices $v$ incident with $e$,
and keep the other incidence orientation invariant.
If we consider $S_e$ as a map $S_e: E(G) \to \{-1,1\}$ such that $S_e(f)=-1$ if $f =e$ and $S_e(f)=1$ otherwise,
 then $\s S_e$ is the incidence orientation of the resulting hypergraph by defining $\s S_e(f,u):=\s(f,u)S_e(f)$ for each incidence $(f,u)$.
Similarly, $S_e$ can be considered as a signature matrix such that $(S_e)_{ff}=S_e(f)$ for all $f \in E(G)$, and $M(G^{\s S_e})=S_e M(G^\s)$.
For a subset $F$ of $E$, define $S_F:=\prod_{e\in F}S_e$, which is a sequence of edge switchings taken at the edges of $F$.
The vertex switching and edge switching were introduced by Rusnak \cite{Rusnak} to discuss the adjacency balance.

Two oriented hypergraphs $G^\s$ and $G^{\tau}$ are called {\it switching equivalent}
 if one can be obtained from another by a sequence of vertex switchings and/or edge switchings.
By the above discussion, we immediately obtain the following lemma.

\begin{lemma}\label{veswt}
Two oriented hypergraphs $G^\s$ and $G^\tau$ are switching equivalent if and only if there exist signature matrices $S_F$ and $S_U$
such that $M(G^\s)=S_F M(G^\tau) S_U$ for some subset $F \subseteq E(G)$ and $U \subseteq V(G)$.
\end{lemma}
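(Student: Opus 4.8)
The plan is to translate the two switching operations into matrix multiplications and then exploit the fact that vertex switchings act on the right (on the vertex index) while edge switchings act on the left (on the edge index), so that the two families commute. This reduces the whole statement to a bookkeeping argument about products of diagonal $\pm1$ matrices.

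For the forward direction, suppose $G^\tau$ is carried to $G^\s$ by a finite sequence of vertex and edge switchings. First I would record, from the identities $M(G^{\tau S_v})=M(G^\tau)S_v$ and $M(G^{\tau S_e})=S_e M(G^\tau)$ established above, that applying a single switching replaces the current incidence matrix by a left product (for an edge) or a right product (for a vertex) with a single-swap signature matrix. Iterating along the sequence expresses $M(G^\s)$ as a product $D_k\cdots D_1\, M(G^\tau)\, E_1\cdots E_l$, where each $D_i$ is an edge-indexed single-swap signature matrix and each $E_j$ a vertex-indexed one. Since the $D_i$ are diagonal on $E(G)$ and the $E_j$ diagonal on $V(G)$, every left factor commutes with every right factor, so I may collect all edge factors on the left and all vertex factors on the right. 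Because each $S_v^2=I$ and likewise $S_e^2=I$, these collapsed products depend only on parity: they equal $S_F$ and $S_U$, where $F\subseteq E(G)$ and $U\subseteq V(G)$ consist of the edges, respectively vertices, switched an odd number of times. This gives $M(G^\s)=S_F M(G^\tau) S_U$.

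For the converse, suppose $M(G^\s)=S_F M(G^\tau) S_U$ with $S_F=\prod_{e\in F}S_e$ and $S_U=\prod_{v\in U}S_v$. I would perform on $G^\tau$ the edge switchings at the edges of $F$ followed by the vertex switchings at the vertices of $U$; by the same identities, the incidence matrix of the resulting oriented hypergraph is exactly $S_F M(G^\tau) S_U=M(G^\s)$. The remaining point is that the incidence matrix is a faithful invariant: on the fixed underlying hypergraph $G$, the support of $M(\cdot)$ records the incidences and the signs of the nonzero entries record the orientation, so $\sigma\mapsto M(G^\sigma)$ is injective. Hence the oriented hypergraph produced by these switchings coincides with $G^\s$, proving that $G^\s$ and $G^\tau$ are switching equivalent.

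The computations are routine; the only step that needs care is the commutation-and-collapse argument, namely verifying that an arbitrarily interleaved sequence of vertex and edge switchings always reduces to the canonical two-sided form $S_F(\cdot)S_U$. This rests on two observations: left (edge) and right (vertex) diagonal multiplications commute, and each switching is an involution, so neither the order nor the multiplicity of the switchings matters and only the parity at each vertex and each edge survives.
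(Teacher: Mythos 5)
Your proposal is correct and follows essentially the same route the paper intends: the paper states this lemma as an immediate consequence of the identities $M(G^{\sigma S_v})=M(G^\sigma)S_v$ and $M(G^{\sigma S_e})=S_e M(G^\sigma)$ together with the definitions $S_U=\prod_{v\in U}S_v$ and $S_F=\prod_{e\in F}S_e$, which is exactly what you use. Your writeup simply makes explicit the routine details the paper leaves implicit (accumulation of left/right factors under interleaving, commutativity and involutivity of the diagonal factors giving the parity collapse, and injectivity of $\sigma\mapsto M(G^\sigma)$ on a fixed underlying hypergraph).
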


Let $\G=(G, \g)$ be a signed hypergraph (or a signed graph).
A {\it vertex switching} $S_v$ on $\G$ at a vertex $v$  is an operation on $\G$ which negatives the sign of each edge $e$ incident to $v$,
  and keep the sign of other edges invariant.
Two signed hypergraphs  are called {\it switching equivalent} if one can be obtained from another by a sequence of vertex switchings.

\subsection{The incidence balance of oriented hypergraphs}
The (oriented, signed) {\it incidence graph} of an oriented hypergraph $(G, \s)$, denoted by $\rmI G^{\sigma}$,
is the (oriented, signed) bipartite graph with vertex set $V(\rmI G^\s)=V(G) \cup E(G)$, edge set $E(\rmI G^\s)=\I(G)$ and (orientation, sign) $\sigma$.
When $\rmI G^{\sigma}$ is regarded as a signed graph, an incidence $(e,v)$ in $G^\s$ will become an edge or an adjacency $(e,v)$ in $\rmI G^{\sigma}$ whose sign is $\s(e,v)$.
Furthermore, a path or a cycle $P$ of $G^\s$ as in (\ref{path}) is also a path or cycle of $\rmI G^\s$ by thinking incidence $i_j$ in $G^\s$ as adjacency in $\rmI G^\s$,
 and vice versa.
So the paths or cycles of $G^\s$ are bijectively corresponding to the those of $\rmI G^\s$,
  whose signs are exactly the incidence signs of the corresponding paths or cycles in $G^\s$.
Denote by $G^+$ an oriented hypergraph $G^\s$ with $\s > 0$.
By the above discussion, we immediately get the following result, in which some partial equivalent statements can be found in \cite{SB}.

\begin{theorem} \label{eqs}
Let $G^\s$ be an oriented hypergraph. Then the following are equivalent.

\begin{enumerate}

\item $G^\s$ is incidence balanced.

\item  Each cycle of $G^\s$ has a positive incidence sign.

\item  All $ab$ paths of $G^\s$ have the same incidence sign for any $a, b$ in $V(G) \cup E(G)$.

\item  The signed incidence graph $\rmI G^\s$ is balanced.

\item  $G^\s$ is switching equivalent to $G^+$.
\end{enumerate}
\end{theorem}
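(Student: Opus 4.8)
The plan is to funnel all five statements through the signed incidence graph $\rmI G^\s$ and then invoke the classical balance theory of signed graphs. The discussion preceding the statement already supplies the key dictionary: paths and cycles of $G^\s$ correspond bijectively to those of $\rmI G^\s$, and the incidence sign of such a path or cycle equals the ordinary product-of-signs of its image in $\rmI G^\s$. Under this dictionary, statement (2) reads ``every cycle of $\rmI G^\s$ is positive'' and statement (3) reads ``any two $ab$-paths in $\rmI G^\s$ carry the same sign,'' so both become equivalent to (4) --- the cycle version by the very definition of a balanced signed graph, the path version by the classical path-characterization of balance. For (3) I note that the vertices of $\rmI G^\s$ are exactly the elements of $V(G)\cup E(G)$ over which (3) quantifies, and that the condition is vacuous for pairs lying in different components, so no connectivity hypothesis is needed.

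For $(4)\Leftrightarrow(5)$ I would first record how switching transfers across the incidence construction. A vertex switching of the signed graph $\rmI G^\s$ at a node $v\in V(G)$ negates the sign of every incidence $(e,v)$, which is precisely the hypergraph vertex switching $S_v$; a vertex switching of $\rmI G^\s$ at a node $e\in E(G)$ negates every incidence $(e,v)$, which is precisely the edge switching $S_e$. Hence a sequence of switchings turning $\rmI G^\s$ into the all-positive $\rmI G^+$ corresponds, term by term, to a sequence of vertex and edge switchings turning $G^\s$ into $G^+$, and conversely. Since a signed graph is balanced exactly when it is switching equivalent to its all-positive version, this gives $(4)\Leftrightarrow(5)$.

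It remains to connect the intrinsic notion (1) to this circle, and here I expect the only genuine bookkeeping. Applying Harary's Balance Theorem to $\rmI G^\s$ yields a bipartition $\{X',Y'\}$ of $V(G)\cup E(G)$ whose negative edges are exactly the cut incidences. Splitting it by node type as $X'=X_V\cup X_E$ and $Y'=Y_V\cup Y_E$ with $X_V,Y_V\subseteq V(G)$ and $X_E,Y_E\subseteq E(G)$, I would check that the rule ``$(e,v)$ is negative iff $v$ and $e$ lie on opposite sides'' factors as $\s(e,v)=\epsilon(v)\,\delta(e)$, where $\epsilon=+1$ on $X_V$, $-1$ on $Y_V$ and $\delta=+1$ on $X_E$, $-1$ on $Y_E$. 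This factorization is the crux: reading it over $V(G)$ says the bipartition $\{X_V,Y_V\}$ balances every edge of $G^\s$, with $\delta(e)$ recording the per-edge choice of which part is positive, which is (1); reading it as $\s(e,v)=S_U(v)S_F(e)$ with $U=Y_V$ and $F=Y_E$ exhibits $G^\s$ as switching equivalent to $G^+$, which is (5). The two converses are immediate: an incidence-balancing bipartition produces such an $\epsilon,\delta$ directly, and any witnessing pair of signature matrices does as well. Thus the main obstacle is not a hard estimate but a careful translation among three languages --- incidences of $G^\s$, edges of the bipartite graph $\rmI G^\s$, and switchings on each --- the decisive observation being that Harary's single bipartition of $\rmI G^\s$ splits into an independent vertex-sign and edge-sign, which is exactly what simultaneously delivers the hypergraph bipartition of (1) and the switching of (5).
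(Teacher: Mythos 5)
Your proof is correct, and for the equivalences among (2), (3), (4), (5) it follows essentially the paper's own route: both arguments funnel everything through the signed incidence graph, use the bijection between paths/cycles of $G^\s$ and those of $\rmI G^\s$, invoke the cycle and path characterizations of balance for signed graphs, and observe that vertex switchings of $\rmI G^\s$ at nodes of the two types are exactly the vertex and edge switchings of $G^\s$. Where you genuinely depart is at statement (1): the paper disposes of $(1)\Leftrightarrow(2)$ by citing Theorem 3.1 of Shi--Brzozowski, whereas you prove the connection from scratch by applying Harary's Balance Theorem to $\rmI G^\s$ and splitting its bipartition of $V(G)\cup E(G)$ by node type, obtaining the factorization $\s(e,v)=\epsilon(v)\,\delta(e)$. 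That factorization is correct and is exactly the common combinatorial core of (1) and (5): $\epsilon$ encodes the balancing vertex bipartition, with $\delta$ absorbing the per-edge choice (allowed by Definition 1.1) of which side meets the positive incidences, while simultaneously $\epsilon$ and $\delta$ are the vertex and edge switchings taking $G^+$ to $G^\s$. What your route buys is self-containedness --- the theorem no longer leans on the external reference to [SB] --- and it makes transparent that (1) and (5) are the same statement in two languages; what the paper's route buys is brevity and an explicit link to the prior literature. If you write this up formally, do state explicitly that $\delta(e)$ is what accommodates the edge-by-edge freedom in the definition of incidence balance; that is the one place where a careless reading could conflate your factorization with the stronger (false) claim that a single global side always carries the positive incidences.
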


\begin{proof}
The equivalence of (1) and (2) can be found in \cite[Theorem 3.1]{SB}.

By the Harary's Balance Theorem, a signed graph is balanced if and only if each cycles has a positive sign, which is also equivalent that
all $vw$ paths have the same sign for any two given vertices $v$ and $w$.
By the correspondence of the cycles or paths in $\rmI G^\s$ and those in $G^\s$ discussed prior this lemma, we get the equivalence of (2) and (4), and also (3) and (4).

 It was also known that a signed graph is balanced if and only if it is switching equivalent to a signed graph with all positive signature \cite[Corollary 3.3]{Z1}.
 Note that a vertex switching on $\rmI G^\s$ is corresponding to a vertex switching or an edge switching on $G^\s$.
 So, $\rmI G^\s$ is switching to $\rmI G^+$ if and only if $G^\s$ is switching to $G^+$, which yields the equivalence of (4) and (5).
 \end{proof}

\section{Matrix for oriented hypergraphs}
For a real matrix $M$ of size $m \times n$, there exist two orthogonal matrices, $U$ of order $m$ and $W$ of order $n$,
  such that $M=U \Lambda W^\top$, where $\Lambda$ is a rectangular diagonal matrix with nonnegative real numbers $\Lambda_{ii}=:\lambda_i$ for $i=1,\ldots, \min\{m,n\}$.
The numbers $\lambda_i$ are called  the {\it singular values} of $M$.
We note that $\lambda_i^2$ are the eigenvalues of $MM^\top$ or $M^\top M$,
  and the columns of $U$ (columns of $W$, respectively) are eigenvectors of $MM^\top$ ($M^\top M$, respectively)
  corresponding to the eigenvalues $\lambda_i^2$ or zero eigenvalues.
Denote by $\lamax(M)$ the maximum singular value of $M$.

The {\it adjacency matrix} of $G^\s$ is defined to be $A(G^\s)=(a^\s_{uv})$,
  where $a^\s_{uv}=\sum_{u,v \in e}\s(e,u)\s(e,v)$ if $u,v$ are distinct and adjacent in $G$, and $a^\s_{uv}=0$ otherwise.
The {\it Laplacian matrix} of $G^\s$ is defined to be $L(G^\s)=D(G)+A(G^\s)$, where $D(G)=\diag \{d_v: v \in V(G)\}$, the  {\it degree matrix} of $G$.

We note that $A(G^+)$ is exactly the adjacency matrix of the hypergraph $G$ introduced by Feng and Li \cite{FL} to study the the second largest eigenvalue.
If $G$ is connected, then $A(G^+)$, as well as $L(G^+)$, is nonnegative and irreducible.
There are also other versions of adjacency matrix and Laplacian matrix of (oriented) hypergraphs; see \cite{CT,Rod,RR}.

Observe that both $A(G^\s)$ and $L(G^\s)$ are real symmetric matrices.
It is also easy to show that
$$ L(G^\s)=M(G^\s)^\top M(G^\s),$$
implying that $L(G^\s)$ is positive semidefinite.
The eigenvalues of $L(G^\s)$ is the square of the singular values of $M(G^\s)$ or zeros.
Denote by $\rho(A)$ the spectral radius of a square matrix $A$.

\begin{lemma}\label{ins} Let $G$ be a connected hypergraph.
Oriented hypergraph $G^\s$ is switching equivalent to $G^+$ if and only if $\lamax(M(G^+))$ is a singular value of $M(G^\s)$.
\end{lemma}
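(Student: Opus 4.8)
The plan is to pass to the signed incidence graph $\rmI G^\s$ and exploit Perron--Frobenius theory for its nonnegative unsigned counterpart. Write $\mathcal{A}^\s$ for the signed adjacency matrix of $\rmI G^\s$; ordering the vertices of $\rmI G^\s$ as $V(G)$ followed by $E(G)$, it has the block form $\mathcal{A}^\s=\left(\begin{smallmatrix}0 & M(G^\s)^\top\\ M(G^\s) & 0\end{smallmatrix}\right)$, so its nonzero eigenvalues are exactly $\pm\sigma$ as $\sigma$ ranges over the singular values of $M(G^\s)$. Since every incidence sign has modulus $1$, the entrywise absolute value satisfies $|\mathcal{A}^\s|=\mathcal{A}^+$, and because $G$ is connected, $\rmI G^+$ is a connected bipartite graph, so $\mathcal{A}^+$ is nonnegative and irreducible with $\rho(\mathcal{A}^+)=\lamax(M(G^+))$. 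Throughout I will use Lemma~\ref{veswt} to identify switching equivalence with a relation $M(G^\s)=S_FM(G^+)S_U$.

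For the forward direction I would simply note that signature matrices are orthogonal, so if $M(G^\s)=S_FM(G^+)S_U$ then $M(G^\s)$ and $M(G^+)$ have identical singular values; in particular $\lamax(M(G^+))$ is one of them.

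The substance is the converse. Assuming $\lamax(M(G^+))$ is a singular value of $M(G^\s)$, it is an eigenvalue of $\mathcal{A}^\s$, whence $\rho(\mathcal{A}^\s)\ge\lamax(M(G^+))=\rho(\mathcal{A}^+)$. On the other hand, the standard comparison theorem for the spectral radius, applied to $|\mathcal{A}^\s|=\mathcal{A}^+$, gives $\rho(\mathcal{A}^\s)\le\rho(\mathcal{A}^+)$, so the two spectral radii coincide at a common value $\rho$. I would then take a real unit eigenvector $w$ of $\mathcal{A}^\s$ for $\rho$ and run the Rayleigh-quotient chain $\rho=w^\top\mathcal{A}^\s w\le |w|^\top|\mathcal{A}^\s|\,|w|=|w|^\top\mathcal{A}^+|w|\le\rho(\mathcal{A}^+)=\rho$, forcing every inequality to be an equality.

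The crux—and the step I expect to be the main obstacle—is extracting the rigid sign structure from this equality. From the last equality, $|w|$ is a Perron vector of the irreducible matrix $\mathcal{A}^+$, hence strictly positive; thus $w$ has no zero coordinate and $D:=\diag(\sgn w_j)$ is a genuine signature matrix with $w=D|w|$. From the first equality, together with $|\mathcal{A}^\s_{ij}|=\mathcal{A}^+_{ij}$ and $|w_i||w_j|>0$, one reads off $\mathcal{A}^\s_{ij}=D_{ii}\mathcal{A}^+_{ij}D_{jj}$ for all $i,j$, that is $\mathcal{A}^\s=D\mathcal{A}^+D$. Decomposing $D=\diag(S_U,S_F)$ along the bipartition $V(G)\cup E(G)$ and comparing the off-diagonal blocks then yields $M(G^\s)=S_FM(G^+)S_U$, so $G^\s$ is switching equivalent to $G^+$ by Lemma~\ref{veswt}. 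The delicate points to get right are that $\mathcal{A}^+$ is genuinely irreducible (needed for positivity of the Perron vector, and hence for $D$ to have all entries $\pm1$) and that the diagonal blocks of $D\mathcal{A}^+D$ stay zero so the block comparison is clean; both follow from connectivity of $G$ and the bipartite block structure.
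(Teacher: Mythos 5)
Your proof is correct, but it takes a genuinely different route from the paper's. The paper stays on the edge side: it forms the Gram matrix $M(G^\s)M(G^\s)^\top$, applies the Perron--Frobenius comparison theorem (the matrix version of Theorem~\ref{eh}) to the pair $|M(G^\s)M(G^\s)^\top| \le M(G^+)M(G^+)^\top$ to obtain an edge signature matrix $S$ with $S M(G^\s)M(G^\s)^\top S = M(G^+)M(G^+)^\top$, and then needs a separate combinatorial step: comparing entries shows $S_e\s(e,v)=S_{e'}\s(e',v)$ for every $v\in e\cap e'$, so that $S_e\s(e,v)$ depends only on $v$, which is what produces the vertex-side signature matrix $T$ with $SM(G^\s)T=M(G^+)$. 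You instead work with the Jordan--Wielandt dilation $\left(\begin{smallmatrix}0 & M(G^\s)^\top\\ M(G^\s) & 0\end{smallmatrix}\right)$, i.e.\ the signed adjacency matrix of the incidence graph $\rmI G^\s$, and run a Rayleigh-quotient equality argument; the payoff is that the equality case hands you the vertex and edge signatures simultaneously as the two diagonal blocks of a single signature matrix $D$ with $\mathcal{A}^\s=D\mathcal{A}^+D$, so the paper's constancy argument is not needed at all. Your version also makes the conceptual link to Theorem~\ref{eqs} transparent: you are in effect proving the spectral criterion for balance of the signed graph $\rmI G^\s$, which is exactly statement (4) there. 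The trade-offs: the paper's route works directly with $M^\top M$ and $MM^\top$, which feeds immediately into Corollary~\ref{LM} on the Laplacian, and it reuses the same comparison theorem cited for the tensor section; your route requires the (standard) fact identifying the nonzero eigenvalues of the dilation with $\pm$ the singular values, and requires irreducibility of $\mathcal{A}^+$ (connectedness of $\rmI G^+$), whereas the paper implicitly needs irreducibility of $M(G^+)M(G^+)^\top$ — a point it glosses over and you handle explicitly. Both directions of your argument, including the forward direction via orthogonality of signature matrices, are sound.
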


\begin{proof}
Necessity.
If $G^\s$ is switching equivalent to $G^+$, it follows from Lemma \ref{veswt} that $M(G^\s)=S_F M(G^+) S_Z$ for some $F \subseteq E(G)$ and $Z \subseteq V(G)$.
There exist two orthogonal matrices $U$ and $W$, such that
$M(G^+)=U \Lambda W^\top$, where $\Delta_{11}=\lamax(M(G^+))$.
So
$M(G^\s)=S_F U \Lambda W^\top S_Z$.
As $S_F U$ and $S_Z W$ are also orthogonal, $\lamax(M(G^+))$ is a singular value of $M(G^\s)$.

Sufficiency.
Let $\lamax(M(G^+))=:\rho$.
Then $\rho^2$ is the largest eigenvalue (also spectral radius) of $M(G^+)M(G^+)^\top$.
By the assumption,
$\rho$ is a singular value of $G^\s$, and hence $\rho^2$ is an eigenvalue of $M(G^\s)M(G^\s)^\top$ corresponding to a real eigenvector $x$.
As $|M(G^\s)M(G^\s)^\top| \le M(G^+)M(G^+)^\top$, by the Perron-Frobenius theorem for nonnegative matrices (or the matrix version of Theorem \ref{eh} in Section 4),
$x$ contains no zero entries, and
$$ S^{-1}M(G^\s)M(G^\s)^\top S= M(G^+)M(G^+)^\top,$$
where $S=\diag\{x_e / |x_e|: e \in E(G)\}$, a signature matrix.
For any two edges $e, e' \in E(G)$,
$$ (S^{-1}M(G^\s)M(G^\s)^\top S)_{e,e'}= (M(G^+)M(G^+)^\top)_{e,e'}.$$
Noting that $S^{-1}=S$, we have
$$ \sum_{v \in e \cap e'} (S_e \s(e,v)) \cdot (S_{e'} \s(e',v))=\sum_{v \in e \cap e'} 1.$$
So, for all $v \in e \cap e'$,
$$ S_e \s(e,v)= S_{e'} \s(e',v).$$
Therefore, for any given vertex $v$, and for all edges $e$ incident to $v$,
$$ (SM(G^\s))_{e,v}=S_e \s(e,v),$$
which is a constant denoted by $\ell_v \in \{-1,1\}$.
Let $T=\diag\{\ell_v^{-1}: v \in V(G)\}$.
Then $SM(G^\s)T=M(G^+)$, and the result follows by Lemma \ref{veswt}.
\end{proof}

\begin{remark}
As $|M(G^\s)M(G^\s)^\top| \le M(G^+)M(G^+)^\top$,  $\lamax(G^+)$ is exactly the maximum singular value of $G^\s$ in Lemma \ref{ins}.
\end{remark}

By Lemma \ref{ins}, we easily get the following corollary.

\begin{cor} \label{LM} Let $G$ be a connected hypergraph.
Oriented hypergraph $G^\s$ is switching equivalent to $G^+$ if and only if  $\rho(L(G^+))$ is an eigenvalue of $L(G^\s)$.
\end{cor}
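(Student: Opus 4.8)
The plan is to reduce the statement to Lemma \ref{ins} by translating the condition on the eigenvalue of $L(G^\s)$ into an equivalent condition on a singular value of $M(G^\s)$. The starting point is the identity $L(G^\s)=M(G^\s)^\top M(G^\s)$ established above, which shows that the eigenvalues of $L(G^\s)$ are precisely the squares of the singular values of $M(G^\s)$ together with possibly some additional zeros. Applying the same identity to the all-positive orientation gives $L(G^+)=M(G^+)^\top M(G^+)$, so that the largest eigenvalue of $L(G^+)$ equals $\lamax(M(G^+))^2$. Since $L(G^+)$ is positive semidefinite, its spectral radius coincides with its largest eigenvalue, and hence $\rho(L(G^+))=\lamax(M(G^+))^2$.

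Next I would record that $\rho(L(G^+))>0$. Because $G$ is connected it carries at least one incidence, so $M(G^+)$ is a nonzero matrix and its maximum singular value $\lamax(M(G^+))$ is strictly positive; therefore so is its square. This positivity is exactly what lets me pass cleanly between nonzero eigenvalues of a Laplacian and nonzero singular values of the corresponding incidence matrix, avoiding the ambiguity caused by the extra zero eigenvalues that $M(G^\s)^\top M(G^\s)$ may carry.

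The central step is then the chain of equivalences: $\rho(L(G^+))$ is an eigenvalue of $L(G^\s)$ if and only if $\lamax(M(G^+))^2$ is an eigenvalue of $M(G^\s)^\top M(G^\s)$, if and only if $\lamax(M(G^+))$ is a singular value of $M(G^\s)$. Both implications follow at once from the singular value decomposition of $M(G^\s)$: each positive eigenvalue of $M(G^\s)^\top M(G^\s)$ is the square of a unique singular value, and conversely, and the positivity $\rho(L(G^+))>0$ noted above guarantees that we remain in the nonzero regime where this correspondence is a genuine bijection.

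With this equivalence in hand, the conclusion is immediate from Lemma \ref{ins}, which already characterizes switching equivalence of $G^\s$ to $G^+$ by the condition that $\lamax(M(G^+))$ be a singular value of $M(G^\s)$. I do not expect a substantive obstacle here; the one point that genuinely requires care is the bookkeeping around the zero eigenvalues and zero singular values, which is precisely why I would isolate the positivity $\rho(L(G^+))>0$ as a separate preliminary before asserting the bijection.
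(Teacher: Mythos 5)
Your proof is correct and takes essentially the same route the paper intends: the corollary is deduced from Lemma \ref{ins} through the identity $L(G^\s)=M(G^\s)^\top M(G^\s)$, which identifies the eigenvalues of the Laplacian with the squared singular values of the incidence matrix (together with possible extra zeros). Your explicit isolation of the positivity $\rho(L(G^+))>0$ to handle those extra zero eigenvalues is precisely the bookkeeping the paper leaves implicit when it states that the corollary follows ``easily'' from Lemma \ref{ins}.
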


\begin{cor} \label{a} Let $G$ be a connected hypergraph.
Oriented hypergraph $G^\s$ is switching equivalent to $G^+$ if and only if $\rho(A(G^+))$ is an eigenvalue of $A(G^\s)$.
\end{cor}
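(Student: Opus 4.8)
The plan is to reproduce the mechanism behind Lemma \ref{ins} and Corollary \ref{LM}, but to run it directly on the adjacency matrix instead of deducing it from the singular values of $M(G^\s)$. The point is that $A(G^\s)=L(G^\s)-D(G)$ and the degree matrix $D(G)$ depends only on the underlying hypergraph, not on the orientation; since $D(G)$ is not in general a scalar matrix, one cannot merely shift the eigenvalue statement of Corollary \ref{LM}, so a separate Perron--Frobenius argument is needed. First I would record how $A$ behaves under switching. From the definition $a^\s_{uv}=\sum_{u,v\in e}\s(e,u)\s(e,v)$ one checks that a vertex switching $S_w$ multiplies $a^\s_{uv}$ by $S_w(u)S_w(v)$, so $A(G^{\s S_w})=S_w A(G^\s)S_w$, whereas an edge switching $S_f$ multiplies the $e=f$ summand by $S_f(f)^2=1$ and fixes the rest, so $A(G^{\s S_f})=A(G^\s)$. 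Hence, if $G^\s$ is switching equivalent to $G^+$, then $A(G^\s)=S\,A(G^+)\,S$ for a vertex signature matrix $S$ (all edge switchings drop out), so the two adjacency matrices are similar and share their spectra; in particular $\rho(A(G^+))$ is an eigenvalue of $A(G^\s)$. This settles necessity.

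For sufficiency I would mimic the sufficiency argument of Lemma \ref{ins}. The basic estimate is the entrywise bound $|A(G^\s)|\le A(G^+)$, which follows from $\bigl|\sum_{u,v\in e}\s(e,u)\s(e,v)\bigr|\le\sum_{u,v\in e}1$. Because $G$ is connected, $A(G^+)$ is nonnegative and irreducible, so by the Perron--Frobenius theorem (the matrix version of Theorem \ref{eh}) we have $\rho(A(G^\s))\le\rho(A(G^+))$. The hypothesis that $\rho(A(G^+))$ is an eigenvalue of $A(G^\s)$ then forces equality, with a nowhere-zero real eigenvector $x$, and yields a signature matrix $S=\diag\{x_v/|x_v|:v\in V(G)\}$ with $S^{-1}A(G^\s)S=A(G^+)$. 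Writing $S=S_U$ for the set $U$ of vertices where $x_v<0$ and using the vertex-switching identity above, this says exactly $A(G^{\s S_U})=A(G^+)$.

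The remaining, and to my mind most delicate, step is to pass from this matrix equality back to a structural conclusion, since equal adjacency matrices do not by themselves make two oriented hypergraphs switching equivalent. Here I would exploit the sign structure. Comparing entries, for every adjacent pair $u,v$ we have $\sum_{u,v\in e}(\s S_U)(e,u)(\s S_U)(e,v)=\sum_{u,v\in e}1$; since each summand lies in $\{-1,1\}$ and the sum equals the number of summands, every summand equals $1$, i.e. $(\s S_U)(e,u)=(\s S_U)(e,v)$ for all $u,v\in e$. Thus on each edge $e$ the orientation $\s S_U$ is constant, either identically $+1$ or identically $-1$. Applying the edge switching $S_e$ at every edge on which it is $-1$ turns $\s S_U$ into the all-positive orientation, so $G^{\s S_U}$, and hence $G^\s$, is switching equivalent to $G^+$. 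I expect this final translation (uniform sign on each edge, then edge switchings) to be the main obstacle, as it is the one place where the adjacency matrix, which only sees the products $\s(e,u)\s(e,v)$, must be upgraded to a statement about the incidences themselves.
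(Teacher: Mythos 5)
Your proof is correct, but the sufficiency is closed by a genuinely different route than the paper's. After the common Perron--Frobenius step yielding $S^{-1}A(G^\s)S=A(G^+)$, the paper does not decode this equality structurally; it adds the degree matrix back to get $S^{-1}L(G^\s)S=L(G^+)$, concludes that $\rho(L(G^+))$ is an eigenvalue of $L(G^\s)$, and then invokes Corollary \ref{LM}, whose proof (via Lemma \ref{ins}) performs the incidence-level analysis on $M(G^\s)M(G^\s)^\top$: there the ``sum of $\pm1$'s equal to the number of summands'' trick is applied edge-by-edge on intersections $e\cap e'$, producing $S_e\s(e,v)=S_{e'}\s(e',v)$ and from that a vertex signature. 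You instead apply the same trick directly to the entries of $A$, vertex-pair by vertex-pair, to conclude that after the vertex switching the orientation is constant on every edge, and then you eliminate the all-negative edges by edge switchings, which (as you correctly observe) are invisible to $A$. This makes your proof of the corollary self-contained --- it needs neither Lemma \ref{ins} nor Corollary \ref{LM} --- and it isolates exactly why the adjacency matrix suffices even though it only sees products of incidences: edge switchings are precisely the ambiguity $A$ cannot detect, and they are harmless for switching equivalence. The paper's route buys a shorter corollary proof by concentrating the structural work in one reusable lemma. The step you flagged as the main obstacle is in fact sound, including degenerate cases (an edge of size one imposes no constraint on $A$, but its orientation is trivially constant and fixable by an edge switching); your necessity argument, via the explicit rules $A(G^{\s S_w})=S_wA(G^\s)S_w$ and $A(G^{\s S_f})=A(G^\s)$, is a direct variant of the paper's, which obtains the same conjugation relation by passing through $M$ and $L$.
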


\begin{proof}
Necessity. By Lemma \ref{veswt}, we have $M(G^+)=S_F M(G^\s) S_Z$ for some signature matrices $S_F$ and $S_Z$.
So $$L(G^+)=M(G^+)^\top M(G^+)=S_Z M(G^\s)^\top M(G^\s) S_Z=S_Z L(G^\s)S_Z.$$
We will get $D(G)+A(G^+)=S_Z(D(G)+A(G^\s))S_Z$, and hence $A(G^+)=S_Z^{-1} A(G^\s)S_Z$ as $S_Z=S_Z^{-1}$.
The necessity follows.

Sufficiency. Let $x$ be a real eigenvector of $A(G^\s)$ associated with $\rho(A(G^+))$.
Noting that $|A(G^\s)| \le A(G^+)$, by Perron-Frobenius theorem, $x$ contains no zero entries, and $S^{-1} A(G^\s) S =A(G^+)$, where
$S=\diag\{x_v/|x_v|: v \in V(G)\}$.
We have
$$S^{-1} L(G^\s)S=S^{-1} (D(G)+A(G^\s))S=D(G)+A(G^+)=L(G^+).$$
So $\rho(L(G^+))$ is an eigenvalue of $L(G^\s)$.
The sufficiency follows by Corollary \ref{LM}.
\end{proof}

Combining Theorem \ref{eqs}, Lemma \ref{ins}, Corollary \ref{LM} and Corollary \ref{a}, we get the main result of this section,
which provides a spectral characterization for incidence balanced oriented hypergraphs.

\begin{theorem}
Let $G^\s$ be a connected oriented hypergraph.
 Then the following statements are equivalent.

{\rm (1)} $G^\s$ is incidence balanced.

{\rm (2)} $\lamax(M(G^+))$ is a singular value of $M(G^\s)$.

{\rm (3)} $\rho(L(G^+))$ is an eigenvalue of $L(G^\s)$.

{\rm (4)} $\rho(A(G^+))$ is an eigenvalue of $A(G^\s)$.
\end{theorem}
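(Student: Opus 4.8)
The plan is to route all four conditions through a single pivot statement. Let $(\star)$ denote the assertion that $G^\s$ is switching equivalent to $G^+$. I would show that each of (1)--(4) is equivalent to $(\star)$; then the four-way equivalence follows formally by transitivity of ``if and only if''. First I would record that Theorem \ref{eqs} already supplies $(1)\Leftrightarrow(\star)$: statement (5) of that theorem asserts precisely that $G^\s$ is incidence balanced if and only if $G^\s$ is switching equivalent to $G^+$, and the connectedness hypothesis of the present theorem only strengthens the setting in which it was proved.

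Next I would invoke the three spectral results, each of which has $(\star)$ as one side of a biconditional. Lemma \ref{ins} gives $(2)\Leftrightarrow(\star)$, stating that for a connected hypergraph $G$ the oriented hypergraph $G^\s$ is switching equivalent to $G^+$ exactly when $\lamax(M(G^+))$ is a singular value of $M(G^\s)$. Corollary \ref{LM} gives $(3)\Leftrightarrow(\star)$ for the Laplacian matrix, and Corollary \ref{a} gives $(4)\Leftrightarrow(\star)$ for the adjacency matrix. Assembling these,
\[
(1)\Leftrightarrow(\star),\qquad (2)\Leftrightarrow(\star),\qquad (3)\Leftrightarrow(\star),\qquad (4)\Leftrightarrow(\star),
\]
so (1), (2), (3), and (4) are pairwise equivalent, which is exactly the claim.

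The step that carries the real weight is not the chaining, which is purely formal, but the verification that the connectedness hypothesis is available for every invoked result. Lemma \ref{ins}, Corollary \ref{LM}, and Corollary \ref{a} all presuppose that $G$ is connected, because their sufficiency directions rest on the Perron--Frobenius theorem applied to the irreducible nonnegative matrices $M(G^+)M(G^+)^\top$, $L(G^+)$, and $A(G^+)$; irreducibility is precisely what connectedness guarantees. Since the theorem hypothesizes a connected oriented hypergraph $G^\s$, the underlying hypergraph $G$ is connected and each cited result applies directly, so no argument beyond collecting the four biconditionals through $(\star)$ is required.
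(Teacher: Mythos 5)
Your proof is correct and is essentially identical to the paper's own argument: the paper also obtains this theorem by combining Theorem \ref{eqs} (whose statement (5) is your pivot $(\star)$) with Lemma \ref{ins}, Corollary \ref{LM}, and Corollary \ref{a}, each of which characterizes switching equivalence to $G^+$ under the connectedness hypothesis. Your added check that connectedness is available for every invoked result is a sound observation, matching the paper's implicit use of it.
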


\section{Tensor for signed hypergraphs}

Let $G$ be a connected $k$-uniform hypergraph, where $k$ is even.
Let $G^\s$ be an oriented hypergraph, and let $\G G^\s$ be the signed hypergraph induced by $G^\s$.
Clearly, a vertex switching on $G^\s$ naturally  gives rise to a vertex switching on $\G G^\s$,
   and any edge switching on $G^\s$ preserves the signs of edges of $\G G^\s$ as $k$ is even.
So,  if $G^\tau$ and $G^\s$ are switching equivalent in the setting of oriented hypergraphs,
then $\G G^\s$ and $\G G^\tau$ are switching equivalent in the setting of signed hypergraphs. However, the converse does not holds.

In this section we will discuss the signed hypergraphs induced by incidence balanced oriented hypergraphs.
We firstly introduce some knowledge about tensors for preparation.

\subsection{Tensors and eigenvalues}

A real {\it tensor} $\A=(a_{i_{1} i_2 \ldots i_{k}})$ of order $k$ and dimension $n$ is a
  multi-dimensional array with real entries $a_{i_{1}i_2\ldots i_{k}}$
  for all $i_{j}\in [n]$ and $j\in [k]$.
The tensor $\A$ is called {\it symmetric} if its entries are invariant under any permutation of their indices.

 Given a vector $x\in \mathbb{C}^{n}$, $\A x^{k} \in \mathbb{C}$ and $\A x^{k-1} \in \mathbb{C}^n$, which are defined as
\begin{align*}
\A x^{k} & =\sum_{i_1,i_{2},\ldots,i_{k}\in [n]}a_{i_1i_{2}\ldots i_{k}}x_{i_1}x_{i_{2}}\cdots x_{i_k},\\
(\A x^{k-1})_i & =\sum_{i_{2},\ldots,i_{k}\in [n]}a_{ii_{2}\ldots i_{k}}x_{i_{2}}\cdots x_{i_k}, i \in [n].
\end{align*}
In 2005, Lim \cite{Lim} and Qi \cite{Qi2} independently introduced the eigenvalues of tensors.

\begin{definition}\cite{Lim,Qi2}
Let $\A$ be a real tensor of order $k$ and dimension $n$.
For some $\lambda \in \mathbb{C}$, if the polynomial system $\A x^{k-1}=\lambda x^{[k-1]}$ has a solution $x \in \mathbb{C}^{n}\backslash \{0\}$,
then $\lambda $ is called an \emph{eigenvalue} of $\A$ and $x$ is an \emph{eigenvector} of $\A$ associated with $\lambda$,
where $x^{[k-1]}:=(x_1^{k-1}, x_2^{k-1},\ldots,x_n^{k-1})$.
\end{definition}

In the above definition,
 if $x$ is a real eigenvector of $\A$, surely the corresponding eigenvalue $\lambda$ is real.
In this case, $\lambda$ is called an {\it H-eigenvalue} of $\A$. Furthermore, an eigenvalue is called {\it H$^+$-eigenvalue}
({\it H$^{++}$-eigenvalue}, respectively) if it is associated with a nonnegative (positive, respectively) eigenvector.
The spectral radius of $\A$ is defined as the maximum modulus of the eigenvalues of $\A$, denoted by $\rho(\A)$.

Chang et al. \cite{CPZ} generalize the Perron-Frobenius theorem from nonnegative matrices to nonnegative tensors.
Friedland et al. \cite{FGH} get some further results for weakly irreducible nonnegative tensors.

\begin{theorem} \cite{FGH} \label{Perron}
If $\A$ is a weakly nonnegative irreducible tensor, then

\begin{enumerate}

\item $\rho(\A)$ is an H$^{++}$-eigenvalue of $\A$.

\item If $\lambda$ is an eigenvalue of $\A$ associated with a positive eigenvector, then $\lambda=\rho(\A)$.

\end{enumerate}
\end{theorem}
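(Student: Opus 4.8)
The plan is to follow the nonlinear Perron--Frobenius strategy adapted to tensors, organizing a self-contained argument around three pillars: existence of a nonnegative eigenvector, identification of its eigenvalue with $\rho(\A)$, and the promotion of nonnegativity to strict positivity via weak irreducibility. First I would establish existence by a fixed-point argument. Normalizing on the standard simplex $\Delta=\{x\in\mathbb{R}^n_{\ge 0}:\sum_i x_i=1\}$, define for each $\varepsilon>0$ the continuous self-map $T_\varepsilon$ of $\Delta$ by
\[ T_\varepsilon(x)_i=\frac{\big((\A x^{k-1})_i+\varepsilon\big)^{1/(k-1)}}{\sum_{j}\big((\A x^{k-1})_j+\varepsilon\big)^{1/(k-1)}}. \]
Brouwer's fixed-point theorem furnishes a fixed point $x^{(\varepsilon)}$, and letting $\varepsilon\to 0$ along a convergent subsequence produces a nonnegative, nonzero vector $x$ with $\A x^{k-1}=\lambda x^{[k-1]}$ for some $\lambda\ge 0$.

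Second, I would bring in the Collatz--Wielandt function
\[ r(x)=\min_{x_i>0}\frac{(\A x^{k-1})_i}{x_i^{k-1}},\qquad x\ge 0,\ x\ne 0, \]
and argue that $\rho(\A)=\max_{x\ge 0,\,x\ne 0} r(x)$, with the maximum attained exactly at eigenvectors. Since the eigenvector $x$ from the first step satisfies $r(x)=\lambda$, this forces $\lambda=\rho(\A)$, yielding the H$^+$-part of statement (1).

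Third, and this is where weak irreducibility enters, I would upgrade $x\ge 0$ to $x>0$. Associate to $\A$ the directed graph with an arc from $i$ to $j$ whenever some entry $a_{i i_2\cdots i_k}$ with $j\in\{i_2,\dots,i_k\}$ is positive; weak irreducibility means this graph is strongly connected. If the zero set $Z=\{i:x_i=0\}$ were nonempty and proper, then $(\A x^{k-1})_i=\lambda x_i^{k-1}=0$ for every $i\in Z$ would preclude any arc from $Z$ to its complement, contradicting strong connectivity. Hence $x>0$, proving that $\rho(\A)$ is an H$^{++}$-eigenvalue.

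Finally, for statement (2), suppose $\lambda$ admits a positive eigenvector $y>0$. The cleanest route is the Hilbert projective metric: the normalized map $x\mapsto(\A x^{k-1})^{[1/(k-1)]}$ is order-preserving and positively homogeneous of degree one, and weak irreducibility makes its action on the interior of the positive cone a strict contraction in the Hilbert metric through the Birkhoff--Hopf theorem. A strict contraction has at most one fixed direction, so all positive eigendirections of $\A$ coincide, forcing $\lambda=\rho(\A)$. I expect this last step to be the main obstacle: unlike the matrix case, the nonlinearity of $\A x^{k-1}$ blocks the usual linear comparison arguments, so establishing the strict-contraction (primitivity) property from weak irreducibility, i.e. controlling the Birkhoff contraction ratio, is the technical heart of the proof.
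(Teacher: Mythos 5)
First, a point of reference: the paper does not prove this statement at all --- it is quoted from Friedland--Gaubert--Han \cite{FGH} --- so your attempt must be judged against the content of the theorem itself, and there it has a genuine, fatal gap. Both your step 2 and step 3 silently replace \emph{weak} irreducibility by the stronger Chang--Pearson--Zhang irreducibility. In step 3, from $i\in Z$ and $(\A x^{k-1})_i=0$ you conclude there is no arc from $Z$ to its complement. What the vanishing of that nonnegative sum actually gives is that every positive entry $a_{i i_2\cdots i_k}$ has \emph{some} slot $i_j\in Z$; the same entry may contain an index outside $Z$ in another slot, and then it does create an arc from $Z$ to the complement. Concretely, let $k=3$, $n=3$, with nonzero entries $a_{123}=a_{213}=a_{312}=a_{333}=1$ (this can be symmetrized). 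The associated graph is strongly connected, so the tensor is weakly irreducible, yet $x=(0,0,1)$ satisfies $\A x^{2}=1\cdot x^{[2]}$: a nonnegative eigenvector with nonempty proper zero set $Z=\{1,2\}$. The same example kills step 2: this tensor has the positive eigenpair $\bigl(\rho,(1/\rho,1/\rho,1)\bigr)$ where $\rho^{3}=\rho^{2}+1$, so $\rho(\A)\approx 1.4656>1$, and hence $x=(0,0,1)$ is an eigenvector that does \emph{not} attain the Collatz--Wielandt maximum. Your assertion that the maximum is ``attained exactly at eigenvectors'' is therefore false; being an eigenvector only yields $\lambda=r(x)\le\rho(\A)$, and nothing prevents the Brouwer limit of step 1 from being exactly such a deficient eigenvector. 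This is precisely why \cite{FGH} argue differently: they treat the order-preserving, degree-one homogeneous map $x\mapsto(\A x^{k-1})^{[1/(k-1)]}$ with nonlinear Perron--Frobenius theory (Gaubert--Gunawardena, Nussbaum), where strong connectivity of the graph of the map forces existence of a \emph{positive} eigenvector by construction, rather than deducing positivity a posteriori from an arbitrary nonnegative eigenvector --- a deduction the example above shows to be invalid.

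Step 4 is also unsound, and not merely technically hard as you suggest. Weak irreducibility does not make the normalized map a strict Hilbert-metric contraction: Birkhoff--Hopf gives ratio $<1$ only when the image of the cone has finite projective diameter, and already for $k=2$ an irreducible but imprimitive nonnegative matrix (e.g.\ a permutation matrix) acts as a Hilbert isometry, so no contraction argument can succeed under your hypotheses. Fortunately none is needed: granting part (1), part (2) follows from an elementary two-sided comparison. Let $u>0$ satisfy $\A u^{k-1}=\rho(\A)u^{[k-1]}$ and $y>0$ satisfy $\A y^{k-1}=\lambda y^{[k-1]}$; let $t_{\min}=\min_i u_i/y_i$ and $t_{\max}=\max_i u_i/y_i$, attained at $i_0$ and $i_1$. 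Monotonicity of $z\mapsto \A z^{k-1}$ on the nonnegative orthant, applied at $i_0$ using $u\ge t_{\min}y$ with equality in coordinate $i_0$, gives $\rho(\A)u_{i_0}^{k-1}\ge t_{\min}^{k-1}\lambda y_{i_0}^{k-1}=\lambda u_{i_0}^{k-1}$, so $\rho(\A)\ge\lambda$; the symmetric argument at $i_1$ gives $\rho(\A)\le\lambda$. This standard comparison is the correct and simple route to statement (2).
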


Let $\mathcal{A}=(a_{i_{1}}\ldots _{i_{k}})$ be a tensor of order $k$ and dimension $n$,
and let $P=\diag \{p_i: i \in [n]\}$ and $Q=\diag \{q_i: i \in [n]\}$ be two diagonal matrices.
The product $P\mathcal{A}Q$ is defined as a tensor of order $k$ and dimension $n$ \cite{Shao}, where
$$(P \A Q)_{i_1\ldots i_k }=p_{i_1}a_{i_{1}}\ldots _{i_{k}}q_{i_2}\cdots q_{i_k}.$$
If $P=Q^{-(k-1)}$, then $\A$ and $P \A Q$ are called {\it diagonal similar}, and have the same spectrum \cite{Shao}.
 Yang and Yang \cite{YY1, YY2, YY3} get further results for Perron-Frobenius theorem.

\begin{theorem}{\em \cite{YY3}}\label{eh}
Let $\mathcal{A}$ and $\mathcal{B}$ be $k$-th order $n$-dimensional tensors with $|\mathcal{B}|\leq \mathcal{A}$.\\
Then

\begin{enumerate}

\item $\rho(\mathcal{B})\leq \rho(\mathcal{A})$.

\item If $\mathcal{A}$ is weakly irreducible and $\rho(\mathcal{B})=\rho(\mathcal{A})$, where $\lambda=\rho(\mathcal{A})e^{i\theta}$ is an eigenvalue of $\mathcal{B}$ corresponding to an eigenvector $y$, then $y=(y_{1},\cdots,y_{n})$ contains no zero entries, and $\mathcal{A}=e^{-i\theta}D^{-(k-1)}\mathcal{B}D$,
where $D=\diag \{{y_{i}}/{|y_{i}|}: i \in [n]\}$.
\end{enumerate}

\end{theorem}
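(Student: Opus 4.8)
The plan is to prove both parts from a single entrywise application of the triangle inequality, using a Collatz--Wielandt comparison for part~(1) and an equality analysis for part~(2). First I would note that $|\mathcal{B}|\le\mathcal{A}$ forces $\mathcal{A}$ to be nonnegative, so $\rho(\mathcal{A})$ is well defined and is an eigenvalue by the Perron--Frobenius theory recalled in Theorem~\ref{Perron}. For part~(1), I would choose an eigenvalue $\mu$ of $\mathcal{B}$ with $|\mu|=\rho(\mathcal{B})$ (the eigenvalues form a finite set, so the maximum modulus is attained) and a corresponding eigenvector $w$. Applying the triangle inequality entrywise to $\mathcal{B}w^{k-1}=\mu w^{[k-1]}$ and then using $|b_{ii_2\ldots i_k}|\le a_{ii_2\ldots i_k}$ gives
$$\rho(\mathcal{B})\,|w_i|^{k-1}=\abs{(\mathcal{B}w^{k-1})_i}\le (\mathcal{A}|w|^{k-1})_i \quad\text{for every } i,$$
that is, $\mathcal{A}|w|^{k-1}\ge\rho(\mathcal{B})\,|w|^{[k-1]}$ with $|w|\ge 0$, $|w|\ne 0$. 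The Collatz--Wielandt lower bound for the spectral radius of a nonnegative tensor, a standard consequence of the theory underlying Theorem~\ref{Perron}, then yields $\rho(\mathcal{A})\ge\rho(\mathcal{B})$.

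For part~(2) I would rerun the same computation with $\mu=\lambda=\rho(\mathcal{A})e^{\im\theta}$ and eigenvector $y$, obtaining $\mathcal{A}|y|^{k-1}\ge\rho(\mathcal{A})|y|^{[k-1]}$. Now the inequality \emph{saturates} the Collatz--Wielandt bound, so $|y|$ is a maximizer; invoking the weak irreducibility of $\mathcal{A}$, a maximizer must be a strictly positive eigenvector, whence $|y|>0$ (so $y$ has no zero entries) and $\mathcal{A}|y|^{k-1}=\rho(\mathcal{A})|y|^{[k-1]}$. With this equality in hand, every inequality in the chain above is forced to be an equality at each $i$. The equality of the two sums $\sum|b_{ii_2\ldots i_k}|\,|y_{i_2}|\cdots|y_{i_k}|=\sum a_{ii_2\ldots i_k}|y_{i_2}|\cdots|y_{i_k}|$, together with $|y_j|>0$ for all $j$, forces $|b_{ii_2\ldots i_k}|=a_{ii_2\ldots i_k}$ for every index tuple, i.e.\ $|\mathcal{B}|=\mathcal{A}$.

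It remains to recover the phases. Writing $d_j=y_j/|y_j|$ and $D=\diag\{d_j:j\in[n]\}$, the equality case of the triangle inequality in $\sum_{i_2,\ldots,i_k}b_{ii_2\ldots i_k}y_{i_2}\cdots y_{i_k}=\rho(\mathcal{A})e^{\im\theta}y_i^{k-1}$ says that every nonzero summand on the left has the same argument as the right-hand side. Dividing out the common positive modulus $a_{ii_2\ldots i_k}|y_{i_2}|\cdots|y_{i_k}|$ and using $y_j=|y_j|d_j$ gives $b_{ii_2\ldots i_k}\,d_{i_2}\cdots d_{i_k}=a_{ii_2\ldots i_k}e^{\im\theta}d_i^{k-1}$; solving for $a_{ii_2\ldots i_k}$ (with $d_j^{-1}=\overline{d_j}$) yields exactly $a_{ii_2\ldots i_k}=e^{-\im\theta}d_i^{-(k-1)}b_{ii_2\ldots i_k}d_{i_2}\cdots d_{i_k}$, which in the product notation of Shao is $\mathcal{A}=e^{-\im\theta}D^{-(k-1)}\mathcal{B}D$.

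The main obstacle I anticipate is the positivity step in part~(2): passing from $\mathcal{A}|y|^{k-1}\ge\rho(\mathcal{A})|y|^{[k-1]}$ to strict positivity of $|y|$ together with equality. Because the map $x\mapsto\mathcal{A}x^{k-1}$ is nonlinear, there is no left-eigenvector pairing as in the matrix Perron--Frobenius theorem, so this cannot be obtained by a one-line duality argument; it rests essentially on the weak irreducibility (strong connectivity of the representation digraph of $\mathcal{A}$) and on the fact that, for a weakly irreducible nonnegative tensor, the Collatz--Wielandt functional attains its maximum only at positive eigenvectors. Once this structural input is secured, the remainder is bookkeeping on moduli and arguments, as sketched above.
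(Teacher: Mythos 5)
You should first know that the paper offers no proof of this statement at all: Theorem \ref{eh} is imported verbatim from the cited reference \cite{YY3} and used as a black box, so the comparison here is against the standard literature proof rather than anything in the paper. Measured that way, your argument is essentially that standard proof, and its computational content is correct: the entrywise triangle inequality gives $\mathcal{A}|w|^{k-1}\ge\rho(\mathcal{B})\,|w|^{[k-1]}$; the Collatz--Wielandt max--min characterization of $\rho(\mathcal{A})$ (which for part (1) must be the version valid for \emph{general} nonnegative tensors, since no irreducibility is assumed there --- this is in \cite{YY1}) turns that into $\rho(\mathcal{B})\le\rho(\mathcal{A})$; and once one knows $|y|>0$ together with $\mathcal{A}|y|^{k-1}=\rho(\mathcal{A})|y|^{[k-1]}$, your equality analysis --- first $|\mathcal{B}|=\mathcal{A}$ from positivity of the products $|y_{i_2}|\cdots|y_{i_k}|$, then phase alignment from saturation of the triangle inequality --- does yield $\mathcal{A}=e^{-\im\theta}D^{-(k-1)}\mathcal{B}D$ in exactly the product notation of \cite{Shao} used by the paper.

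The one point where your proof is not self-contained is the step you yourself flag: that a nonnegative nonzero $x$ with $\mathcal{A}x^{k-1}\ge\rho(\mathcal{A})x^{[k-1]}$ must, when $\mathcal{A}$ is weakly irreducible, be strictly positive and satisfy the eigen-equation. This is a genuine theorem of the cited sources (\cite{FGH}, \cite{YY3}), so invoking it is legitimate, but note it does not follow from Theorem \ref{Perron} as quoted (neither clause of that theorem speaks about super-eigenvectors), and the matrix-style left-eigenvector argument is indeed unavailable. To close the loop one can argue as follows: let $u>0$ be the Perron vector from Theorem \ref{Perron}(1), put $s=\max_i x_i/u_i>0$, so $x\le su$ with equality at some index $j$; then $\rho s^{k-1}u_j^{k-1}=\rho x_j^{k-1}\le(\mathcal{A}x^{k-1})_j\le(\mathcal{A}(su)^{k-1})_j=\rho s^{k-1}u_j^{k-1}$, so each nonnegative summand $a_{ji_2\ldots i_k}\bigl(s^{k-1}u_{i_2}\cdots u_{i_k}-x_{i_2}\cdots x_{i_k}\bigr)$ vanishes, and since $x_{i_l}\le su_{i_l}$ coordinatewise this forces $x_{i_l}=su_{i_l}$ for every index occurring in a tuple with $a_{ji_2\ldots i_k}>0$, i.e.\ on all out-neighbors of $j$ in the representation digraph of $\mathcal{A}$; strong connectivity (weak irreducibility) then propagates this to $x=su$, giving both $x>0$ and $\mathcal{A}x^{k-1}=\rho x^{[k-1]}$. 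With that lemma supplied, your proof is complete and is the same route as the source.
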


\subsection{Spectral characterization of the signed hypergraphs}
Let $G$ be a $k$-uniform hypergraph with vertex set $[n]$.
The {\it adjacency tensor} \cite{YYQ} of a signed hypergraph $\G=(G,\g)$ is defined as $\A(\G)=(a^\g_{i_1 \ldots a_k})$, a $k$-th order and $n$-dimensional symmetric tensor, where
\[a^\g_{i_{1}\ldots i_{k}}=\left\{
 \begin{array}{ll}
\frac{\g(e)}{(k-1)!}, &  \mbox{if~} e=\{i_{1},\ldots,i_{k}\} \in E(G);\\
  0, & \mbox{otherwise}.
  \end{array}\right.
\]
Let $\D(G)$ be a  $k$-th order and $n$-dimensional diagonal tensor, called {\it degree tensor} of $G$, where $d_{i\ldots i}$ is the degree of the vertex $i$ for all $i \in [n]$.
The {\it Laplacian tensor} of $\G$ is defined to be $ \L(\G)=\D(G)+\A(\G)$.

Note that for the signed hypergraph $\G G^\s$ induced by oriented hypergraph $G^\s$, the sign of each edge $e$ is given by $\sgn_\s e$.
If $k$ is even, by the definition the adjacency tensor $\A(\G G^+)=-\A(G)$, where $\A(G)$ is the adjacency tensor of the hypergraph $G$ introduced in \cite{CD},
and the Laplacian tensor $\L(\G G^+)=\D(G)-\A(G)$ which is exactly the Laplacian tensor $\L(G)$ of $G$ introduced in \cite{Qi}.
If $k=2$, then $\A(G)$ and $\L(G)$ are the usual adjacency matrix and Laplacian matrix of a simple graph $G$ respectively.

For a uniform hypergraph $G$, it is proved that $\A(G)$ is weakly irreducible if and only if $G$ is connected \cite{PZ,YY3}.
The result also holds for $\L(G)$, and $\A(\G), \L(\G)$ of signed hypergraphs $\G$.

\begin{lemma}\label{SS} Let $G^\s$ and $G^{\tau}$ be two $k$-uniform oriented hypergraphs.
 Signed hypergraph $\G G^{\tau}$ is switching equivalent to $\G G^\s$  if and only if $\A(\G G^{\tau})=S \A(\G G^\s) S$ for some signature matrix $S$.
\end{lemma}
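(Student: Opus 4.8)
The plan is to translate the combinatorial notion of switching equivalence for signed hypergraphs into an algebraic identity on the adjacency tensors, handling the two implications separately. The key observation is that a vertex switching $S_v$ on the signed hypergraph $\G G^\s$ negates $\g(e)$ for every edge $e$ incident to $v$; I would first record how this single operation acts on the entries of $\A(\G G^\s)$. If $e=\{i_1,\ldots,i_k\}$ contains $v$, then exactly one index among $i_1,\ldots,i_k$ equals $v$ in each position where $v$ appears, so the factor $s_{i_1}\cdots s_{i_k}$ with $s_u=-1$ iff $u=v$ picks up exactly one $-1$ and thus flips the sign of $a^\g_{i_1\ldots i_k}$; if $v\notin e$, the product of signs is $+1$ and the entry is unchanged. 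This is precisely the description of the product $S\A(\G G^\s)S$ restricted to a single switching, where $S=\diag\{s_u:u\in[n]\}$.

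For the forward direction, I would iterate this observation. A sequence of vertex switchings at a set $U\subseteq V(G)$ corresponds to the signature matrix $S=\diag\{\ell_u:u\in[n]\}$ with $\ell_u=-1$ iff $u\in U$. Since composing switchings multiplies the diagonal signature matrices, and the entry $a^\g_{i_1\ldots i_k}$ for $e=\{i_1,\ldots,i_k\}$ is multiplied by $\prod_{j}\ell_{i_j}=\prod_{u\in e}\ell_u$, which counts the parity of $|e\cap U|$, I would verify that the net effect on each edge is to negate its sign exactly when $|e\cap U|$ is odd — matching the combinatorial definition of the resulting sign after switching at all vertices of $U$. Hence $\A(\G G^\tau)=S\A(\G G^\s)S$ with this $S$.

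For the converse, suppose $\A(\G G^\tau)=S\A(\G G^\s)S$ for some signature matrix $S=\diag\{s_u\}$. I would read off the entrywise identity: for each edge $e=\{i_1,\ldots,i_k\}\in E(G)$, the sign in $\G G^\tau$ equals $\left(\prod_{u\in e}s_u\right)$ times the sign in $\G G^\s$. Setting $U=\{u:s_u=-1\}$, the factor $\prod_{u\in e}s_u=(-1)^{|e\cap U|}$, so $\G G^\tau$ is obtained from $\G G^\s$ by switching at precisely the vertices of $U$; thus the two signed hypergraphs are switching equivalent. I would note that $G^\s$ and $G^\tau$ share the same underlying hypergraph $G$, so the edge sets coincide and the comparison is well defined entry by entry.

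I expect the main subtlety to be bookkeeping rather than genuine difficulty: the adjacency tensor is symmetric, so a given edge $e$ contributes entries in all $k!/(k-1)!=k$... more precisely all $k!$ orderings (up to the $(k-1)!$ convention in the definition), and I must check that the signature product $p_{i_1}q_{i_2}\cdots q_{i_k}=s_{i_1}\cdots s_{i_k}$ is invariant under permuting the indices of $e$, so that $S\A S$ remains symmetric and the sign change is well defined per edge rather than per ordered tuple. This symmetry holds because $\prod_{u\in e}s_u$ depends only on the set $e$, not on the ordering. Once this invariance is confirmed, both implications reduce to matching $(-1)^{|e\cap U|}$ against the parity of switchings, and the result follows.
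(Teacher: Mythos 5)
Your proposal is correct and follows essentially the same route as the paper's proof: both directions reduce to the single entrywise identity $(S\A(\G G^\s)S)_{i_1\ldots i_k}=\bigl(\prod_{u\in e}S_{uu}\bigr)\A(\G G^\s)_{i_1\ldots i_k}$, matching the factor $(-1)^{|e\cap U|}$ against the cumulative effect of switching at the vertices of $U=\{u: S_{uu}=-1\}$. Your extra check that $\prod_{u\in e}S_{uu}$ depends only on the set $e$ (so the conjugated tensor stays symmetric and the sign change is per edge) is a point the paper leaves implicit, but it is the same argument.
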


\begin{proof} Necessity.
Suppose $\G G^{\tau}$ is obtained from $\G G^\s$ by making a sequence of vertex switchings $S_v$ in turn at each vertex $v \in W$.
Recall the signature matrix $S_W:= \prod_{v \in W}S_v$ defined in Section 2.2.
For any edge $e=\{i_1,  \ldots, i_k\}$,
\begin{equation}
\A(\G G^{\tau})_{i_1 \ldots i_k}=\A(\G G^\s)_{i_1 \ldots i_k} \prod_{i_j \in e}S_{i_ji_j}=S_{i_1i_1}\A(\G G^\s)_{i_1 i_2 \ldots i_k}S_{i_2i_2}\ldots S_{i_ki_k}, \label{ds}
 \end{equation}
 which is equivalent to $\A(\G G^{\tau})=S \A(\G G^\s) S$ by the definition.

Sufficiency.  Suppose  $\A(\G G^{\tau})=S \A(\G G^\s) S$ for some signature matrix $S$,
where $S_{uu}=-1$ for all $u \in W$, and  $S_{uu}=1$ otherwise.
It follows from (\ref{ds})
that making vertex switchings for each vertex $v \in W$ on $\G G^\s$,
we obtain a resultant hypergraph whose adjacency tensor is the same as $\A(\G G^{\tau})$.
 Note that two signed hypergraphs with the same underlying hypergraph
     are same if and only if their adjacency tensors are same (or the signs of corresponding edges are same).
So $\G G^{\tau}$ can be obtained from $\G G^\s$ by a sequence of vertex switchings at each vertex of $W$.
The sufficiency follows.
\end{proof}

\begin{theorem}\label{minusR} Let $G^\s$ be a connected $k$-uniform oriented hypergraph, where $k$ is even.
Signed hypergraph $\G G^{\s}$ is switching equivalent to $\G G^+$ if and only if
$-\rho(\A(G))$ is an H-eigenvalue of $\A(\G G^\s)$.
\end{theorem}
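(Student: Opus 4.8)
The plan is to reduce both directions to the tensor Perron--Frobenius comparison recorded in Theorem \ref{eh}, using two facts established earlier: for even $k$ one has $\A(\G G^+)=-\A(G)$, and switching equivalence of induced signed hypergraphs is precisely signature conjugation of adjacency tensors (Lemma \ref{SS}). The single arithmetic point that drives everything is that, because $k$ is even, $k-1$ is odd; hence a signature matrix $S$ satisfies $S^{-(k-1)}=S$ and a real vector obeys $y_i^{k-1}=\sgn(y_i)\,|y_i|^{k-1}$, which is what lets a sign flip pass cleanly through the $(k-1)$-homogeneous eigenequation. Throughout put $\rho:=\rho(\A(G))$; since $G$ is connected, $\A(G)$ is weakly irreducible, and by Theorem \ref{Perron} $\rho$ is an H$^{++}$-eigenvalue of $\A(G)$ with a positive eigenvector $x$.

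For necessity, I would begin from Lemma \ref{SS}: if $\G G^\s$ is switching equivalent to $\G G^+$, then $\A(\G G^\s)=S\,\A(\G G^+)\,S=-S\,\A(G)\,S$ for some signature matrix $S$. I then propose the candidate eigenvector $y:=Sx$ and verify in coordinates that
\[
(\A(\G G^\s)y^{k-1})_i=-\,s_i\,(\A(G)x^{k-1})_i=-\rho\,s_i x_i^{k-1}=-\rho\,y_i^{k-1},
\]
where $s_i:=S_{ii}$: the first equality uses $s_jy_j=x_j$, the second the eigenequation $\A(G)x^{k-1}=\rho x^{[k-1]}$, and the last that $k-1$ is odd. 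Thus $-\rho$ is an H-eigenvalue of $\A(\G G^\s)$ realized by the real eigenvector $y$.

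For sufficiency, suppose $-\rho$ is an H-eigenvalue, with a real eigenvector $y\neq0$. Since each edge sign is $\pm1$, entrywise $|\A(\G G^\s)|=\A(G)$, so Theorem \ref{eh}(1) gives $\rho(\A(\G G^\s))\le\rho$; as $|-\rho|=\rho$, this forces $\rho(\A(\G G^\s))=\rho$. Writing $-\rho=\rho e^{\im\pi}$ and using the weak irreducibility of $\A(G)$, I would invoke Theorem \ref{eh}(2) with dominating tensor $\A(G)$ and dominated tensor $\A(\G G^\s)$, so that $\theta=\pi$: the eigenvector $y$ has no zero entries and
\[
\A(G)=e^{-\im\pi}D^{-(k-1)}\,\A(\G G^\s)\,D=-D\,\A(\G G^\s)\,D,
\]
where $D=\diag\{y_i/|y_i|:i\in[n]\}$ is a signature matrix and $D^{-(k-1)}=D$. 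Substituting $\A(G)=-\A(\G G^+)$ and conjugating once more by $D$ (using $D^2=I$) rearranges this to $\A(\G G^\s)=D\,\A(\G G^+)\,D$, whereupon Lemma \ref{SS} yields that $\G G^\s$ is switching equivalent to $\G G^+$.

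I expect the sufficiency direction to be the delicate part. The crux is feeding the eigenvalue of modulus $\rho$ correctly into Theorem \ref{eh}: one must deduce the equality $\rho(\A(\G G^\s))=\rho$ from the comparison $|\A(\G G^\s)|=\A(G)$, pin down the phase $\theta=\pi$ so that the resulting factor $e^{-\im\pi}=-1$ reproduces the sign in $\A(\G G^+)=-\A(G)$, and keep the parity bookkeeping ($D^{-(k-1)}=D$, $S^{-(k-1)}=S$) consistent so that the concluding identity is a genuine signature conjugation as Lemma \ref{SS} requires.
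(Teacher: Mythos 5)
Your proof is correct and follows essentially the same route as the paper's: necessity via Lemma \ref{SS} combined with the Perron--Frobenius positive eigenvector of $\A(G)$ from Theorem \ref{Perron}, and sufficiency by feeding the comparison $|\A(\G G^\s)|=\A(G)$ into Theorem \ref{eh}(2) to produce a signature conjugation, then closing with Lemma \ref{SS}. The only cosmetic differences are that the paper applies Theorem \ref{eh} to $-\A(\G G^\s)$ with real eigenvalue $\rho(\A(G))$ (so $\theta=0$) rather than to $\A(\G G^\s)$ with $\theta=\pi$ as you do, and that you verify explicitly the hypothesis $\rho(\A(\G G^\s))=\rho(\A(G))$, which the paper leaves implicit.
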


\begin{proof} Necessity. Suppose $\G G^{\s}$ is switching equivalent to $\G G^+$.
It follows from Lemma \ref{SS} that for some signature matrix $S$,
$$\A(\G G^+)=-\A(G)=S \A(\G G^\s) S=S^{-(k-1)}\A(\G G^\s)S, $$
where the third equality holds as $S^k=I$ for even $k$.
By Theorem \ref{Perron}, $\rho(\A(G))$ is an eigenvalue of $\A(G)$ associated with a positive eigenvector $y$.
So $-\rho(\A(G))$ is an eigenvalue of $\A(\G G^{\sigma})$ associated with a real eigenvector $Sy$.

Sufficiency. Let $y$ be a real eigenvector of $A(\G G^{\sigma})$ associated with $-\rho(\A(G))$, namely
$$-\mathcal{A}(\G G^{\sigma})y^{k-1}=\rho(\A(G))y^{[k-1]}.$$
Note that $|-\mathcal{A}(\G G^{\sigma})|=\A(G)$. It follows from Theorem \ref{eh} that $y$ contains no zero entry and
\begin{align*}\mathcal{A}(G)=D_{y}^{-(k-1)} (-\mathcal{A}(\G G^{\sigma}))D_{y},
\end{align*}
where $D_{y}=\diag \{{y_v}/{|y_v|}: v \in V(G)\}$.
Note that $D_{y}$ is a signature matrix as $y$ is real.
As $k$ is even,  we have
\begin{align*}
\label{eq}
\mathcal{A}(\G G^+)=-\mathcal{A}(G)=D_{y}^{-(k-1)}\mathcal{A}(\G G^{\sigma})D_{y}=D_{y}\mathcal{A}(\G G^{\sigma})D_{y}.
\end{align*}
The sufficiency follows by Lemma \ref{SS}.
\end{proof}

\begin{lemma}\label{even42}
Let $G^\s$ be a connected $k$-uniform oriented hypergraph, where $k \equiv 2 \mod 4$.
Then the  following are equivalent.

\begin{enumerate}
\item $-\rho(\A(G))$ is an H-eigenvalue of $\A(\G G^\s)$.

\item $-\rho(\A(G))$ is an eigenvalue of $\A(\G G^\s)$.

\end{enumerate}

\end{lemma}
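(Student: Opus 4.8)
The plan is to prove Lemma \ref{even42} by establishing the nontrivial direction $(2)\Rightarrow(1)$, since $(1)\Rightarrow(2)$ is immediate (an H-eigenvalue is by definition an eigenvalue, so the implication holds trivially). The entire content of the lemma rests on showing that when $-\rho(\A(G))$ occurs as a complex eigenvalue of $\A(\G G^\s)$, it must in fact be an H-eigenvalue, i.e. it admits a \emph{real} eigenvector. The key structural fact I intend to exploit is the congruence $k \equiv 2 \pmod 4$, which means $k$ is even so that $|\A(\G G^\s)| = \A(G)$, and moreover $k-1$ is odd with $k-1 \equiv 1 \pmod 4$.

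\begin{proof}
$(1)\Rightarrow(2)$ is trivial since any H-eigenvalue is an eigenvalue.

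$(2)\Rightarrow(1)$. Suppose $-\rho(\A(G))$ is an eigenvalue of $\A(\G G^\s)$, with an eigenvector $z \in \mathbb{C}^n\setminus\{0\}$, so that
$$\A(\G G^\s)z^{k-1} = -\rho(\A(G))\, z^{[k-1]}.$$
Writing $\lambda = -\rho(\A(G)) = \rho(\A(G))e^{\im\pi}$ and recalling that $|-\A(\G G^\s)| = \A(G)$, we apply Theorem \ref{eh}(2) to the pair $(\A(G),\, -\A(\G G^\s))$: since $G$ is connected, $\A(G)$ is weakly irreducible, and as $\rho(-\A(\G G^\s)) = \rho(\A(\G G^\s))$ attains $\rho(\A(G))$, the eigenvector $z$ has no zero entry and
$$\A(G) = e^{-\im\theta} D_z^{-(k-1)}\bigl(-\A(\G G^\s)\bigr)D_z,$$
where $D_z = \diag\{z_v/|z_v|: v\in V(G)\}$ and $e^{\im\theta}$ is the phase such that $-\rho(\A(G)) = \rho(-\A(\G G^\s))e^{\im\theta}$, i.e. $e^{\im\theta}=1$ here after absorbing the sign. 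Writing each diagonal entry of $D_z$ as $e^{\im\phi_v}$, the entrywise comparison on an edge $e=\{i_1,\ldots,i_k\}$ forces a relation of the form $e^{\im(\phi_{i_1}+\cdots+\phi_{i_k})\cdot(\text{sign pattern})}$ to equal a fixed root of unity. The crucial point is that because $k-1$ is odd, the map $t\mapsto t^{k-1}$ on the unit circle, when combined with the reality of $\rho(\A(G))$, constrains the phases $e^{\im\phi_v}$ to take values in a set closed under the operation inducing real vectors.

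The main obstacle, and the heart of the argument, is to convert this phase information into an \emph{actual} real eigenvector. My plan is to show that the diagonal similarity above, together with $k\equiv 2\pmod 4$, implies the phases $\phi_v$ can be normalized to lie in $\{0,\pi\}$. Concretely, since $\A(G)$ has nonnegative real entries while $D_z^{-(k-1)}(-\A(\G G^\s))D_z$ must match it entrywise, on each edge the product of phase factors $z_{i_2}\cdots z_{i_k}\,\overline{z_{i_1}}^{\,k-1}/|\cdots|$ must be a positive real together with the sign $\pm\sgn_\s e$. Raising the governing relation to a suitable power and using that $k-1$ is coprime to the relevant order (an argument available precisely when $k\equiv 2\pmod 4$, so that $\gcd(k-1,k)$ and the parity interact favorably), I expect to deduce that $z_v/|z_v|\in\{1,-1\}$ up to a global unimodular constant. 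Normalizing that global constant away, $z$ becomes (a scalar multiple of) a real vector, whence $\lambda=-\rho(\A(G))$ is an H-eigenvalue. This completes $(2)\Rightarrow(1)$ and the lemma.
\end{proof}
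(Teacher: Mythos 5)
Your setup coincides with the paper's: apply Theorem \ref{eh}(2) to the pair $|-\A(\G G^\s)|=\A(G)$ to conclude that the eigenvector $z$ has no zero entries and $\A(G)=D_z^{-(k-1)}\bigl(-\A(\G G^\s)\bigr)D_z$ with $D_z$ unimodular diagonal. But from that point on the proof never materializes: ``I expect to deduce'' and ``my plan is to show'' stand in for the entire content of the lemma, and the one concrete claim you do commit to --- that the phases can be normalized so $z_v/|z_v|\in\{1,-1\}$ up to a global unimodular constant, i.e.\ that the \emph{given} eigenvector $z$ is essentially real --- is false. Counterexample: let $G$ be the single edge $e=\{1,\ldots,6\}$, $k=6$, with $\s$ all positive, so $\sgn_\s e=(-1)^{k-1}=-1$ and $\A(\G G^\s)=-\A(G)$, $\rho(\A(G))=1$. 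Take $z=(\omega,\omega^{-1},1,1,1,1)$ with $\omega=e^{\im\pi/3}$. Since $\prod_u z_u=1$ and $z_v^6=1$, one checks $(\A(G)z^{k-1})_v=\prod_{u\neq v}z_u=z_v^{-1}=z_v^{k-1}$, so $\A(\G G^\s)z^{k-1}=-z^{[k-1]}$; yet $z$ has three phases spaced $\pi/3$ apart and is not a scalar multiple of any real vector. So no amount of ``raising to a suitable power'' can make $z$ real. Your appeal to $\gcd(k-1,k)$ is also vacuous: that gcd equals $1$ for every $k$, so it cannot be where the hypothesis $k\equiv 2\pmod 4$ enters.

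What the hypothesis actually buys --- and what the paper does --- is the construction of a \emph{different}, genuinely real eigenvector, not the reality of $z$. From the entrywise comparison, after normalizing one phase to $1$ and using connectedness, each $d_u:=z_u/|z_u|$ is a $k$-th root of unity, $d_u=e^{\frac{\im 2\pi}{k}\ell_u}$, with $\sum_{u\in e}\ell_u\equiv 0 \pmod k$ on edges with $\s^e=1$ and $\sum_{u\in e}\ell_u\equiv \frac{k}{2}\pmod k$ on edges with $\s^e=-1$. Now replace $d_u$ by $t_u:=(-1)^{\ell_u}$. Because $k\equiv 2\pmod 4$, the number $k/2$ is \emph{odd}, so $\sum_{u\in e}\ell_u$ is even in the first case and odd in the second; hence $t^e=d^e$ for every edge, $T=\diag\{t_u\}$ is a signature matrix with $-\A(G)=T^{-(k-1)}\A(\G G^\s)T$, and applying $T$ to the positive Perron vector of $\A(G)$ yields a real eigenvector of $\A(\G G^\s)$ for $-\rho(\A(G))$ (in the example above this produces $t=(-1,-1,1,1,1,1)$). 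This passage from $D_z$ to $T$ --- preserving the edge products while flattening phases to $\pm1$ --- is exactly the step your proposal lacks, and it is precisely what fails when $k\equiv 0\pmod 4$, as Example \ref{ex} shows: there $k/2$ is even and the parities no longer match.
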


\begin{proof}
It suffices to prove (2) $\Rightarrow$ (1).
Let $y$ be an eigenvector of $\A(\G G^\s)$ associated with eigenvalue $-\rho(\A(G))$.
By a similar discussion to the sufficiency in the proof of Theorem \ref{minusR},
we have that $y$ contains no zero entry and
\begin{equation}\label{nonH1}
\mathcal{A}(G)=D_{y}^{-(k-1)} (-\mathcal{A}(\G G^{\sigma}))D_{y},
\end{equation}
where $D_{y}=\diag\{{y_v}/{|y_v|}:  v \in V(G)\}=:\diag\{d_v: v\in V\}$.
To complete the proof, it is sufficient to find a signature matrix $T$ such that
\begin{equation}\label{nonH2}
-\mathcal{A}(G) =T^{-(k-1)}\mathcal{A}(\G G^{\sigma})T.
\end{equation}

For any edge $e=\{i_1, \ldots, i_k\} \in E(G)$, by Eq. (\ref{nonH1}), we have
$$
-a_{i_1\ldots i_k}=d_{i_1}^{-(k-1)} (-1)^{k-1}\sigma^e a_{i_1\ldots i_k}d_{i_2} \ldots d_{i_k}=-\frac{d^e}{d_{i_1}^k} \sigma^e a_{i_1 \ldots i_k},
$$
where $d^e:=\prod_{i_j \in e} d_{i_j}$.
So, we have
 \begin{equation}\label{dvC}
\s^e d^e=d_v^k, \hbox{~for any~} v \in e.
\end{equation}

 For a specified vertex $v$, we can normalize the eigenvector $y$ such that
 $d_v=\frac{y_v}{|y_v|}=1$.
 As $G$ is connected, by Eq. (\ref{dvC}) we have $d_u^k=1$ for all vertices $u \in V(G)$, and $\s^e d^e=1$ for all edges $e \in E(G)$.

Write $d_u=e^{\frac{\im 2 \pi}{k} \ell_u}$ for  each vertex $u \in V(G)$,  where $\mathbf{i}=\sqrt{-1}$, $\ell_u \in [k]$.
If $\s^e=1$, then $d^e=1$, namely,
$$ d^e=e^{\frac{\im 2 \pi}{k} \sum_{u \in e} \ell_u}=1,$$
which implies that
\begin{equation}\label{one} \sum_{u \in e} \ell_u \equiv 0 \mod k.\end{equation}
If $\s^e=-1$, then $d^e=-1$, namely,
$$ d^e=e^{\frac{\im 2 \pi}{k} \sum_{u \in e} \ell_u}=-1,$$
which implies that
\begin{equation}\label{minusone} \sum_{u \in e} \ell_u \equiv \frac{k}{2} \mod k.\end{equation}

Denote $t_u:=(-1)^{\ell_u}$ for  each vertex $u \in V(G)$, and $t^e:=\prod_{u \in e}t_u$.
As $k \equiv 2 \mod 4$, if Eq. (\ref{one}) holds, then
$ \sum_{u \in e} \ell_u \equiv 0 \mod 2$, and $t^e=d^e=1$.
If  Eq. (\ref{minusone}) holds, then
$ \sum_{u \in e} \ell_u \equiv 1 \mod 2$, and $t^e=d^e=-1$.
So, in either case we have $t^e=d^e$ for each edge $e \in E(G)$.
Noting that $t_v^k=d_v^k=1$, we have
$$\s^e t^e=t_v^k$$
for any edge $e$ and any vertex $v \in e$.
Let $T=\diag\{t_v: v \in V(G)\}$, a signature matrix.
It is easy to verify $-\A(G)=T^{-(k-1)}\A(\G G^\s)T$, and hence $-\rho(\A(G))$ is an H-eigenvalue of $\A(G^\s)$.
\end{proof}

\begin{remark} \label{0mod4}
By Lemma \ref{even42},  we can replace the H-eigenvalue by eigenvalue in Theorem \ref{minusR} when $k \equiv 2  \mod 4$.
However, it does not hold when $k \equiv 0 \mod 4$ as evidenced by the following example.
\end{remark}

Before we illustrate Remark \ref{0mod4} by an example, we need to introduce some related knowledge.
An even uniform hypergraph $G$ is called {\it odd-bipartite} (or {\it odd-transversal}) if $V(G)$ has a bipartition $\{V_1, V_2\}$
such that each edge intersections $V_1$ (and $V_2$) in an odd number of vertices; otherwise, $G$ is called {\it non-odd-bipartite}.

\begin{lemma} \cite{FY,SSW} \label{odd}
	Let $G$ be a connected $k$-uniform hypergraph. Then $-\rho(\A(G))$ is an H-eigenvalue of $\A(G)$ if and only if $k$ is even and $G$ is odd-bipartite.
\end{lemma}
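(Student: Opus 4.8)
The plan is to translate the H-eigenvalue condition into a statement about signature matrices via the Perron--Frobenius machinery already recorded in Theorem \ref{Perron} and Theorem \ref{eh}, and then to read off the combinatorial bipartition from the resulting sign pattern. Since $G$ is connected, $\A(G)$ is a nonnegative weakly irreducible tensor, so by Theorem \ref{Perron} the spectral radius $\rho:=\rho(\A(G))$ is an H$^{++}$-eigenvalue, with a positive eigenvector $y$ satisfying $\A(G)y^{k-1}=\rho\,y^{[k-1]}$. Throughout I would write $d^e:=\prod_{v\in e}d_v$ for a signature $D=\diag\{d_v:v\in V(G)\}$, mirroring the notation used in the proof of Lemma \ref{even42}.

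For the sufficiency direction, suppose $k$ is even and $G$ is odd-bipartite with bipartition $\{V_1,V_2\}$. I would set $d_v=1$ for $v\in V_1$ and $d_v=-1$ for $v\in V_2$, and put $z:=Dy$. Because each edge $e$ meets $V_2$ in an odd number of vertices, $d^e=(-1)^{|e\cap V_2|}=-1$ for every $e\in E(G)$. A direct component-wise computation then shows $\A(G)z^{k-1}=-\rho\,z^{[k-1]}$: in the $i$-th component each edge $e\ni i$ contributes the factor $\prod_{v\in e\setminus\{i\}}d_v=d^e/d_i=-d_i$, which is the \emph{same} for every incident edge, so it factors out and flips the sign, while the identity $z_i^{k-1}=d_i^{k-1}y_i^{k-1}=d_iy_i^{k-1}$ (valid since $k$ is even, hence $k-1$ odd) matches it on the right. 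Thus $-\rho$ is an H-eigenvalue with the real eigenvector $z$.

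For the necessity direction, suppose $-\rho$ is an H-eigenvalue, with real eigenvector $y$. Since $-\rho=\rho\,e^{\im\pi}$ has modulus $\rho=\rho(\A(G))$, I would apply Theorem \ref{eh}(2) with $\mathcal{A}=\mathcal{B}=\A(G)$ and $\theta=\pi$: this gives that $y$ has no zero entry and that $\A(G)=-D^{-(k-1)}\A(G)D$, where $D=\diag\{y_v/|y_v|:v\in V(G)\}$ is a genuine signature matrix because $y$ is real. Comparing the entries indexed by an edge $e$, and letting any vertex $u\in e$ play the role of the first index, this identity reduces to $d^e=-d_u^{\,k}$ for every $u\in e$.

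The crux of the argument --- and the step I expect to require the most care --- is extracting the parity information from $d^e=-d_u^{\,k}$. If $k$ were odd, then $d_u^{\,k}=d_u$, so $d^e=-d_u$ would have to hold simultaneously for all $u\in e$; this forces every $d_u$ $(u\in e)$ to take the common value $-d^e$, whence $d^e=\prod_{v\in e}d_v=(-d^e)^{k}=-d^e$, a contradiction. Thus $k$ must be even, and then $d_u^{\,k}=1$ gives $d^e=-1$ for every edge. Setting $V_2:=\{v:d_v=-1\}$ and $V_1:=\{v:d_v=1\}$, the relation $(-1)^{|e\cap V_2|}=d^e=-1$ says exactly that each edge meets $V_2$ in an odd number of vertices; since $|e|=k$ is even, it meets $V_1$ in an odd number as well, so each intersection count is at least one, both blocks are nonempty, and $\{V_1,V_2\}$ witnesses that $G$ is odd-bipartite.
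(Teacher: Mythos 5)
Your proof is correct, but there is nothing in the paper to compare it against: Lemma \ref{odd} is quoted from the references [FY] and [SSW] and the paper gives no proof of it. What you have effectively done is reconstruct a self-contained proof from the two Perron--Frobenius tools the paper does record, and your argument runs exactly parallel to the paper's own proofs of Theorem \ref{minusR} and Lemma \ref{even42}: Theorem \ref{eh}(2) with $\theta=\pi$ converts the H-eigenvalue hypothesis into the diagonal-similarity identity $\A(G)=-D^{-(k-1)}\A(G)D$ for a signature matrix $D$, entrywise comparison over an edge yields $d^e=-d_u^{k}$ for every $u\in e$, and the parity of $k$ is then read off from this sign identity --- the same mechanism as Eq.~(\ref{dvC}) in the paper, run in the unsigned setting. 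Both directions check out: in the sufficiency you correctly exploit that $k-1$ is odd so $d_i^{k-1}=d_i$, which is where evenness of $k$ enters, and in the necessity your contradiction $d^e=-d^e$ for odd $k$ is sound, as is the conclusion that $d^e=-1$ forces every edge to meet $V_2=\{v: d_v=-1\}$ an odd number of times. The only caveats are cosmetic: the argument implicitly assumes $G$ has at least one edge (so that $\rho(\A(G))>0$ and the entrywise comparison is non-vacuous), a degenerate case the literature ignores; and your proof, while matching the paper's toolkit, is essentially the argument of [SSW], so as a citation-replacement it is entirely adequate.
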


\begin{example} \label{ex}
Let $G^{\sigma}$ be a $4$-uniform oriented hypergraph with vertex set $V=[6]$,
and edge set $E= \{e_1=\{1,2,3,4\}, e_2=\{{1,2,5,6\}}, e_3=\{3,4,5,6\}\}$, and an incidence orientation $\s$
 which gives all edge-vertex incidences positive orientations except $(e_1,2), (e_2,6), (e_3,3)$, as shown in Fig. \ref{nonbi}.
By using the terminology in \cite{KF}, the hypergraph $G$ can also be denoted by $C_3^{4,2}$,
   which is obtained from a triangle $C_3$ (as a simple graph) by blowing up each vertex into a $2$-set.
It is known  from \cite[Theorem 2.3, Lemma 3.12]{KF} that $C_3^{4,2}$ is non-odd-bipartite as $C_3$ is non-bipartite, and $\rho(\A(C_3^{4,2}))=\rho(\A(C_3))=2$.

Observe that $\A(\G G^\s)=\A(G)$, and $-2$ is an eigenvalue of $\A(G)$ associated with an eigenvector $x=(\im, 1, \im, 1, \im, 1)$.
But $-2$ is not an H-eigenvalue of $\A(G)$; otherwise $G$ is odd-bipartite by Lemma \ref{odd} which yields a contradiction.

We assert that $\G G^{\s}$ cannot be switching equivalent to $\G G^+$.
Otherwise by Lemma \ref{SS} $\A(\G G^\s)=S \A(\G G^+)S$ for some signature matrix $S$.
Then we have
$$\A(G)=\A(\G G^\s)=S \A(\G G^+)S=S^{-3} (-\A(G))S,$$
which implies that $-\rho(\A(G))$ is an H-eigenvalue of $\A(G)$, a contradiction.
\end{example}

\begin{figure}
	\centering
		\setlength{\unitlength}{1bp}%
	\begin{picture}(90, 106)(0,0)
	\put(0,0){\includegraphics[scale=.8]{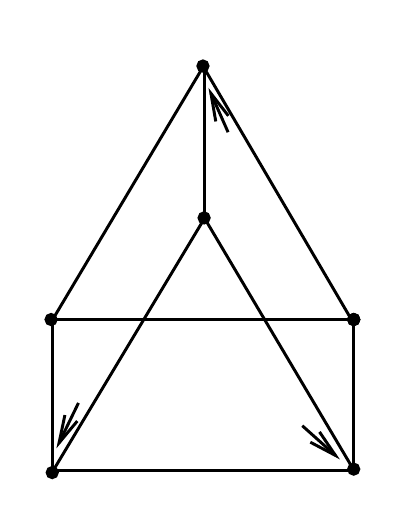}}
	\put(4.00,42.00){\fontsize{10.69}{12.83}\selectfont 1}
	\put(4.00,6.00){\fontsize{10.69}{12.83}\selectfont 2}
	\put(50.00,102.00){\fontsize{10.69}{12.83}\selectfont 3}
	\put(50.00,68.00){\fontsize{10.69}{12.83}\selectfont 4}
	\put(85.60,42.00){\fontsize{10.69}{12.83}\selectfont 5}
	\put(85.60,6.00){\fontsize{10.69}{12.83}\selectfont 6}
	\put(28.00,60.00){\fontsize{11.76}{14.11}\selectfont $e_1$}
	\put(42.00,25.00){\fontsize{12.93}{15.52}\selectfont $e_2$}
	\put(56.00,60.00){\fontsize{11.76}{14.11}\selectfont $e_3$}
	\end{picture}%
		\caption{A $4$-uniform oriented hypergraph $G^\s$} \label{nonbi}
	\end{figure}

Note that
$$ \L(\G G^\s)x^k= \sum_{e \in E}(x_e^k + k (\sgn_\s e) x^e),$$
where $x_e^k:=\sum_{v \in e}x_v^k$, and $x^e:=\prod_{v \in e}x_v$.
If $k$ is even, by AM-GM inequality, for any real vector $x$,
$$x_e^k + k (\sgn_\s e) x^e \ge 0,$$
implying that $\L(\G G^\s)$ is positive semidefinite with all H-eigenvalues nonnegative.

At the end of this paper, we get our main result of this section complementary to Lemma \ref{SS} and Theorem \ref{minusR},
which provides a spectral characterization for signed hypergraphs induced by the incidence balanced oriented hypergraphs.

\begin{theorem} \label{Lazero}
Let $G^\s$ be a connected $k$-uniform oriented hypergraph, where $k$ is even. Then the following are equivalent.

\begin{enumerate}
	\item $\G G^{\s}$ is switching equivalent to $\G G^+$.
	\item $S \A(\G G^{\s}) S=-\A(G)$ for some signature matrix $S$.
	\item $-\rho(\A(G))$ is an H-eigenvalue of $\A(\G G^\s)$.
	\item $S \L(\G G^{\s}) S=\L(G)$ for some signature matrix $S$.
	\item $0$ is an H-eigenvalues of $\L(\G G^\s)$.
	\item $G^\s$ has an bipartition $\{V^+,V^-\}$ such that
	each positive edge (respectively negative edge) intersects $V^-$ in an odd number (respectively, even number) of vertices.
\end{enumerate}	
\end{theorem}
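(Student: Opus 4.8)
The plan is to prove all six statements equivalent by closing the cycle
$(1)\Leftrightarrow(2)\Leftrightarrow(4)\Rightarrow(5)\Rightarrow(6)\Rightarrow(2)$, together with the single extra equivalence $(1)\Leftrightarrow(3)$. Two of these links require no work. Since $k$ is even we have $\A(\G G^+)=-\A(G)$, so applying Lemma \ref{SS} to the pair $\G G^\tau=\G G^+$ and $\G G^\s$ yields $(1)\Leftrightarrow(2)$ immediately; and $(1)\Leftrightarrow(3)$ is exactly Theorem \ref{minusR}. Thus the real content lies in the four remaining implications.

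For $(2)\Leftrightarrow(4)$ I would exploit that the degree tensor $\D(G)$ is diagonal: for a signature matrix $S$ and even $k$ the product $S\D(G)S$ has diagonal entries $s_i^k\,d_{i\ldots i}=d_{i\ldots i}$, so $S\D(G)S=\D(G)$. Hence $S\L(\G G^\s)S=\D(G)+S\A(\G G^\s)S$, and since $\L(G)=\D(G)-\A(G)$ this equals $\L(G)$ precisely when $S\A(\G G^\s)S=-\A(G)$. The implication $(4)\Rightarrow(5)$ then rests on the diagonal-similarity machinery of Section 4: a signature matrix satisfies $S^{-(k-1)}=S$ for even $k$, so $(4)$ says that $\L(\G G^\s)$ and $\L(G)$ are diagonal similar and hence share their spectrum, including H-eigenvalues. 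Because $(\A(G)\mathbf{1}^{k-1})_i=d_i=(\D(G)\mathbf{1}^{k-1})_i$, we get $\L(G)\mathbf{1}^{k-1}=0$, so $0$ is an H-eigenvalue of $\L(G)$, and transporting the eigenvector $\mathbf{1}$ through $S$ shows $0$ is an H-eigenvalue of $\L(\G G^\s)$.

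The substantive step, which I expect to be the main obstacle, is $(5)\Rightarrow(6)$. Let $y$ be a real eigenvector for the H-eigenvalue $0$; contracting the eigenequation with $y$ gives $\L(\G G^\s)y^k=0$. Using the identity $\L(\G G^\s)y^k=\sum_{e\in E}\bigl(y_e^k+k(\sgn_\s e)y^e\bigr)$ and the AM--GM fact that each summand is nonnegative for even $k$, every summand must vanish. These vanishing conditions first force $y$ to have no zero entry: if $y_v=0$ then the term of any edge $e\ni v$ collapses to $\sum_{u\in e}y_u^k$, whose vanishing drives all entries on $e$ to zero, and connectivity of $G$ propagates this to all of $V(G)$, contradicting $y\ne0$. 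The AM--GM equality case then forces $\abs{y_u}$ to be constant on each edge, hence constant on $V(G)$ by connectivity, so after scaling $y_v\in\{1,-1\}$; the residual equality $(\sgn_\s e)y^e=-\abs{y^e}$ becomes $(\sgn_\s e)(-1)^{\abs{e\cap V^-}}=-1$, where $V^-:=\{v:y_v=-1\}$ and $V^+:=\{v:y_v=1\}$. Reading off the parities gives exactly statement $(6)$: positive edges meet $V^-$ in an odd number of vertices, negative edges in an even number.

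Finally, for $(6)\Rightarrow(2)$ I would reverse this last computation. Given the bipartition $\{V^+,V^-\}$, set $S=\diag\{s_v\}$ with $s_v=1$ on $V^+$ and $s_v=-1$ on $V^-$. For an edge $e$ the corresponding entry of $S\A(\G G^\s)S$ is $\bigl(\prod_{v\in e}s_v\bigr)\,\sgn_\s e/(k-1)!=(-1)^{\abs{e\cap V^-}}\,\sgn_\s e/(k-1)!$, and the parity condition in $(6)$ makes this equal to $-1/(k-1)!$ in both the positive and the negative case, i.e.\ $S\A(\G G^\s)S=-\A(G)$. This closes the cycle, and together with $(1)\Leftrightarrow(3)$ it establishes the equivalence of all six statements.
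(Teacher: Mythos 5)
Your proposal is correct and follows essentially the same route as the paper: $(1)\Leftrightarrow(2)\Leftrightarrow(3)$ via Lemma \ref{SS} and Theorem \ref{minusR}, $(2)\Leftrightarrow(4)$ by splitting off the degree tensor, $(4)\Rightarrow(5)$ by diagonal similarity and the all-ones eigenvector, and $(5)\Rightarrow(6)$ by the AM--GM equality analysis of $\L(\G G^\s)y^k=0$. The only (immaterial) differences are that you close the cycle with $(6)\Rightarrow(2)$ where the paper verifies $(6)\Rightarrow(4)$, and that you spell out the zero-entry propagation and the realness of the transported eigenvector, which the paper leaves implicit.
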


\begin{proof} It is clear that $(1)\Leftrightarrow (2) \Leftrightarrow (3)$ by  Lemma \ref{SS} and Theorem \ref{minusR}.
	We note that
	$$S \L(\G G^{\s}) S=S(\D(G)+\A(\G G^\s))S=\D(G)+ S(\A(\G G^\s))S.$$
	So the equivalence of (2) and (4) follows.
	
(4)	$\Rightarrow$ (5). It follows from (4) that
$\L(G) = S \L(\G G^{\s}) S=S^{-(k-1)} \L(\G G^{\s}) S$ as $S^k=I$ for even $k$.
So $\L(\G G^{\s})$ and $\L(G)$ are diagonal similar, and hence have the same spectrum.
Since $\L(G)$ always has a zero eigenvalue associated with the all-ones vector as an eigenvector, the statement (5) follows.

(5)	$\Rightarrow$ (6). If $0$ is an H-eigenvalue of $\L(\G G^{\s})$, then there exists a real eigenvector $x$ such that
$$ \L(\G G^\s)x^k= \sum_{e \in E}(x_e^k + k (\sgn_\s e) x^e)=0.$$
Noting that $k$ is even, so for each edge $e$,
$$x_e^k + k (\sgn_\s e) x^e = 0,$$
implying that for any two vertices $v, v'$ in $e$, $|x_v|=|x_{v'}|$ and $ (\sgn_\s e) \sgn x^e \le 0$.
As $x$ contains a nonzero entry, by the connectedness of $G$, $|x_v|$ is a non-zero constant for all $v$.
So we may assume $x_v \in \{-1,1\}$.

Let $V^+=\{v \in V: x_v =1\}$ and let $V^-=\{v \in V: x_v =-1\}$.
Then $V^+, V^-$ consist of a (possibly trivial) bipartition of $V(G)$.
So for each positive edge $e$ (with $\sgn_\s e>0$), $\sgn x^e=-1$, and hence $e$ intersects $V^-$ in an odd number of vertices.
Similarly, for each negative edge $e$ (with $\sgn_\s e<0$), $\sgn x^e=1$ and hence $e$ intersects $V^-$ in an even number of vertices.

(6)	$\Rightarrow$ (4). Define $S$ to be a signature matrix such that $S_{vv} =1$ if $v \in V^+$ and $S_{vv}=-1$ otherwise.
It is easy to verify that $S\L(\G G^\s)S=\L(G)$.
\end{proof}

\bibliographystyle{amsplain}

\end{document}